\newtheorem{theorem}{Theorem}[section]
\newtheorem{lemma}[theorem]{Lemma}
\newtheorem{cor}[theorem]{Corollary}
\newtheorem{question}[theorem]{Question}
\theoremstyle{definition}
\newtheorem{definition}[theorem]{Definition}
\newtheorem{remark}[theorem]{Remark}
\newtheorem{example}[theorem]{Example}
\newcommand{\on}{\operatorname}
\renewcommand{\d}{\partial}
\renewcommand{\Im}{\on{Im}}
\newcommand{\T}{\mathcal{T}}
\newcommand{\V}{\mathcal{V}}
\newcommand{\eps}{\varepsilon}
\newcommand{\s}{\mathfrak{s}}
\newcommand{\ts}{\mathfrak{t}}
\newcommand{\CC}{\mathbb{C}}
\newcommand{\RR}{\mathbb{R}}
\newcommand{\ZZ}{\mathbb{Z}}
\newcommand{\QQ}{\mathbb{Q}}
\newcommand{\HH}{\mathbb{H}}
\newcommand{\FF}{\mathbb{F}}
\newcommand{\LL}{\mathbb{L}}
\newcommand{\CP}{{\mathbb {CP}}}
\newcommand{\ti}{\tilde}
\DeclareMathOperator{\Hom}{Hom}
\DeclareMathOperator{\PD}{PD}
\DeclareMathOperator{\Ker}{Ker}
\DeclareMathOperator{\Char}{Char}
\title[Heegaard Floer contact invariants of links of singularities]{Heegaard
Floer invariants of contact structures on links of surface singularities}
\author{J\'ozsef Bodn\'ar}
\author{Olga Plamenevskaya}
\thanks{OP was partially supported by NSF grant DMS-1510091 and a Simons
Fellowship.}
\address{Department of Mathematics, Stony Brook University, Stony Brook, NY,
11794,  U.S.A.}
\email{olga@math.stonybrook.edu}
\email{jozef.bodnar@stonybrook.edu}
\begin{document}

\begin{abstract} Let a contact 3-manifold $(Y, \xi_0)$ be the 
link of a normal surface singularity
 equipped with its canonical contact structure $\xi_0$. 
 We prove a special property of such contact 3-manifolds of ``algebraic''
origin: the Heegaard Floer invariant $c^+(\xi_0)\in HF^+(-Y)$ cannot lie in the
image of the $U$-action on $HF^+(-Y)$.
It follows that Karakurt's ``height of $U$-tower'' invariants are always 0
for canonical contact structures on singularity links, which contrasts the fact
that the height of $U$-tower can be arbitrary for general fillable contact
structures. Our proof  uses the interplay between the Heegaard Floer homology
and N\'emethi's lattice cohomology.
\end{abstract}

 \maketitle

\section{Introduction and background}

Consider a complex surface $\Sigma\subset \CC^N$ with an isolated critical point
at the origin. 
For a sufficiently small $\eps>0$, the intersection $Y= \Sigma \cap
S^{2N-1}_{\eps}$ with the
sphere $S^{2N-1}_{\eps}= \{|z_1|^2+|z_2|^2+\dots +|z_N|^2 = \eps\}$ is a smooth
3-manifold called the {\em link of the singularity}.
The complex structure on $\Sigma$ induces the {\em canonical}
contact structure $\xi_0$ on $Y$ given by the distribution of complex
tangencies; in this setting, $(Y, \xi_0)$ is also called {\em Milnor fillable}.
The contact manifold $(Y, \xi_0)$ is independent of the choice of $\eps$, up to
contactomorphism. Moreover, it is shown in  \cite{CNP} that 
the Milnor fillable contact structure on  $Y$ is unique (note that in general, a link of singularity may support a number
of tight or Stein fillable contact structures).  The
Milnor fillable structure can be thought of as the contact structure
closely related to the algebraic origin of the manifold $Y$ as link of
singularity (and potentially carry information about the singularity). 
We would
like to address

\begin{question} Are there any special features that distinguish the canonical
contact structure from other contact structures on the link of
singularity?
\end{question}

It is known, for example, that $\xi_0$ is always Stein fillable \cite{BO} and
universally
tight~\cite{LO}.  In this paper, we
work with Ozsv\'ath-Szab\'o's Heegaard Floer homology \cite{OS} and
N\'emethi's lattice cohomology \cite{Ne1, Ne2} to establish  special
properties of the Heegaard Floer contact invariant $c^+$ (introduced in \cite{OScont}) of  canonical contact
structures. Recall that for a
$3$-manifold $Y$, the Heegaard Floer homology $HF^+(Y)$ is an 
$\FF[U]$-module (coefficients are assumed to be $\FF=\ZZ/2$, see
Remark~\ref{Zcoeffs} for $\ZZ$ coefficients). We review the context and
background after
stating our main result in terms of the $U$-action.

\begin{theorem} \label{main} Let $(Y, \xi_0)$ be a rational homology sphere link
of a normal surface
singularity
with its canonical contact structure, and  $c^+(\xi_0) \in HF^+(-Y)$ its
contact
invariant. Assume that the singularity is {\em not} rational.  Then $c^+(\xi)
\notin \Im U$.  
\end{theorem}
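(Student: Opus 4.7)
The plan is to translate the statement into N\'emethi's lattice cohomology framework, where the $U$-action on $HF^+(-Y, \s_{can})$ becomes a combinatorial operation on connected components of sublevel sets. The first step is to invoke N\'emethi's theorem (building on Ozsv\'ath--Szab\'o for plumbings) identifying $HF^+(-Y, \s_{can})$ with the lattice cohomology $\mathbb{H}^0(\Gamma, k_{can})$ of a good resolution graph $\Gamma$ with canonical characteristic element $k_{can}$, as graded $\FF[U]$-modules. In this description, $\mathbb{H}^0$ is assembled from the $H^0$'s of sublevel sets $\mathcal{S}_n \subset L \otimes \RR$ of the weight function $\chi_{k_{can}}(l) = -\tfrac{1}{2}(l \cdot l + k_{can} \cdot l)$ on the resolution lattice $L$, and the $U$-action is induced by the restriction maps along the inclusions $\mathcal{S}_{n-1} \subset \mathcal{S}_n$.

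The second step is to identify $c^+(\xi_0)$ with a specific class, namely $[0] \in H^0(\mathcal{S}_0)$ represented by the connected component of the origin $0 \in L$. This should be pinned down by computing the grading of $c^+(\xi_0)$ via the canonical Stein filling (using the Ozsv\'ath--Szab\'o grading formula for contact invariants of Stein fillable structures) and matching it to the grading of the origin class under N\'emethi's isomorphism. The third step is to exploit non-rationality to obstruct $U$-divisibility: by Artin's criterion, non-rationality produces an effective cycle $l > 0$ with $\chi_{k_{can}}(l) \leq 0$, forcing $\mathcal{S}_0$ to have non-trivial topology beyond the origin and contributing to $\mathbb{H}^0_{\mathrm{red}}$ at the grading of $[0]$. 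The key combinatorial claim is that $[0]$ is not in the image of the restriction map $H^0(\mathcal{S}_1) \to H^0(\mathcal{S}_0)$: the connected component of $0$ in $\mathcal{S}_1$ strictly enlarges the $0$-component of $\mathcal{S}_0$ by absorbing further components of $\mathcal{S}_0$ that cannot be separately cancelled by restrictions of other components of $\mathcal{S}_1$.

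The main obstacle I anticipate is the second step: the explicit identification of $c^+(\xi_0)$ with the origin-class in lattice cohomology. Matching the Heegaard Floer contact invariant with a specific combinatorial element in N\'emethi's picture requires careful tracking, most plausibly through a Heegaard diagram adapted to the plumbing or via naturality of $c^+$ under the Stein cobordism given by the plumbing itself. Once this identification is in place, the non-rationality step becomes a structural statement about components of sublevel sets of $\chi_{k_{can}}$, which should be accessible via the combinatorial techniques developed by N\'emethi.
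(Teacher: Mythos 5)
Your Step~1 is a genuine gap: the isomorphism $HF^+(-Y,\s_{can}) \cong \HH^0(\Gamma,k_{can})$ that you invoke is \emph{not} known in general. It is established only under restrictive hypotheses (e.g.\ for links of almost-rational singularities); for general negative-definite plumbings there is only a spectral sequence from lattice homology to Heegaard Floer homology, and it is open whether it collapses. Since you want a statement about arbitrary non-rational normal surface singularities, you cannot start from this isomorphism. The paper circumvents this precisely by observing that an isomorphism is unnecessary: there is always an $\FF[U]$-equivariant map $T^+: HF^+(-Y) \to \HH_0^+(\Gamma)$ (Ozsv\'ath--Szab\'o, defined via cobordism maps $F^+_{W(\Gamma),\s}$ over all $Spin^c$ structures on the plumbing). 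If one shows that the image $T^+(c^+(\xi_0))$ is not in $\Im U$, then by $U$-equivariance neither is $c^+(\xi_0)$. So your Steps~2 and~3 should be recast as computations downstream of $T^+$, not as translations through an isomorphism.

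Two further issues with Steps~2 and~3. First, identifying the image of $c^+(\xi_0)$ purely by its grading does not suffice, since a given grading level of $\HH^0_0(\Gamma)$ may contain several generators. The correct mechanism is Plamenevskaya's theorem: for a Stein filling $(X,J)$, the map $F^+_{W,\s_{can}}$ sends $c^+(\xi_0)$ to the generator of $\T_0^+$, while $F^+_{W,\s}(c^+(\xi_0)) = 0$ for every other $Spin^c$ structure $\s$ on $W$. This pins down $T^+(c^+(\xi_0))$ as the delta function supported at $K_0=c_1(TX,J)$, which under the translation to the graded root becomes the class of the connected component of $0 \in L$. Second, this computation requires $X(\Gamma)$ to be a Stein filling, i.e.\ the minimal resolution; but the minimal resolution may fail to have normal crossings (a bad minimal resolution), in which case $\Gamma$ must be a non-minimal good resolution and $X(\Gamma)$ is no longer Stein. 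One then needs the blowup formula for Heegaard Floer cobordism maps to show that $T^+(c^+(\xi_0))$ is nonzero exactly on the $Spin^c$ structures $K_0 + 2\PD(D)$ where $D$ ranges over sums of distinct total transforms of the exceptional $(-1)$-divisors, and then a separate combinatorial argument to show that this set of lattice points all lies in (indeed equals) the connected component $C_0$ of $0$ in the sublevel set $\overline{L}_{K_0,\leq 0}$. Your proposal as written does not address this. Your Step~3, on the other hand, is essentially sound: once the image is identified with the valence-one end vertex $w_0$ of the main trunk of the canonical graded root, non-rationality forces a branch off the trunk at level zero, from which $U$-indivisibility follows by a short direct check.
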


A singular point $p$ is {\em normal} when bounded holomorphic functions defined
in its
punctured neighborhood can be extended over $p$.  More importantly to us, 
normality together with the homological assumption on $Y$ is equivalent to
saying that $Y$ is the boundary of a negative-definite 4-manifold which is a
plumbing of spheres such that the plumbing graph is a tree (see
section~\ref{s:plumbings}).  

Given a 3-manifold $Y$, recall that its Heegaard Floer homology, developed 
in \cite{OS} and sequels, is an $\FF[U]$-module 
$HF^+(Y)$ that decomposes as a direct sum of components corresponding to
$Spin^c$ structures on $Y$.
When $Y$ is a rational homology sphere, $HF^+(Y, \s) = \T
\oplus \mathrm{Torsion}$, where $\T$ is 
isomorphic to $\FF[U,U^{-1}] / U \FF[U]$ (with the appropriate grading)
and $\mathrm{Torsion}$ is annihilated by $U^d$ for some large $d$.  A rational
homology sphere $Y$ is called an $L$-space when its
Heegaard Floer homology is the simplest possible, i.e.  $HF^+(Y, \s) = \T$ for
every $\s \in Spin^c(Y)$. In the case where $Y$ is the link of a normal surface
singularity, it is known that $Y$ is an $L$-space if and only if the
singularity is {\em rational}, \cite{OSplumb,NeL}. (We will not discuss
algebro-geometric definition of rational singularities here; in fact the reader
can take the $L$-space criterion above as a definition.)

Given a contact 3-manifold $(Y, \xi)$,  its
invariant $c^+(\xi)$ is defined as a
distingushed element of the Heegaard Floer group $HF^+(-Y)$, \cite{OScont}.
More precisely, $c^+(\xi) \in
HF^+(-Y, \ts_{\xi})$, where $\ts_{\xi}$ is the $Spin^c$ structure induced by
$\xi$.  For Stein fillable contact structures, the invariant is non-zero,
 in particular, 
$c^+(\xi_0) \neq 0$
for the canonical contact structure $\xi_0$ on a link
of any surface singularity. 
For an arbitrary contact 3-manifold $(Y, \xi)$, the contact invariant 
is annihilated by the $U$-action, i.e. $c^+(\xi) \in \Ker U$.

 The $\FF[U]$-module structure was used by
Karakurt in \cite{Ka} to define a related numerical invariant of contact
structures. More precisely, Karakurt considers the height of $U$-tower over
$c^+(\xi)$ to define 
$$
ht(\xi) = \max \{n: c^+(\xi) \in U^{n} \cdot HF^+(-Y) \}. 
$$
We have taken the liberty of changing the sign in Karakurt's original
definition; in \cite{Ka}, the invariant is defined as $\sigma(\xi) = -
ht(\xi)$. Karakurt computes $ht$ for a number of contact structures obtained
by Legendrian surgery, and shows that $ht$ can take arbitrary integer values
from $0$ to $+\infty$. In \cite{KO}, Karakurt and \"{O}zt\"{u}rk
show that the height of tower is 0 for canonical contact structures on links of
``almost rational'' (AR) singularities, using the fact that
Heegaard Floer homology is isomorphic to N\'{e}methi's lattice
cohomology \cite{Ne1, Ne2} for 3-manifolds of this type. For rational
singularities, it is easy to see that $ht=+\infty$ for every contact structure
on the link: this follows from the fact that the link $Y$ of a rational
singularity is an
$L$-space, i.e. $HF^+(-Y, \s) = \T$ for every $Spin^c$ structure $s$ on $Y$,
\cite{OSplumb,Ne1}. Karakurt-\"{O}zt\"{u}rk ask whether height of tower can
take arbitrary
integer values for canonical contact structures on links of general normal
surface singularities \cite[Question 6.2]{KO}. It follows immediately from
Theorem~\ref{main}  that the answer is manifestly no:

\begin{cor} Consider a normal surface singularity which is not rational and its link is a rational homology sphere. 
Let $\xi_0$ be the canonical contact structure on the link. Then $ht(\xi_0)=0$.
\end{cor}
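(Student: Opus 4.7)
The plan is to apply Theorem~\ref{main} directly, since the corollary is essentially a tautology given the theorem. First, I would recall Karakurt's definition
\[
ht(\xi) = \max \{n \geq 0 : c^+(\xi) \in U^{n}\cdot HF^+(-Y)\}.
\]
Because $c^+(\xi_0)\in HF^+(-Y) = U^0\cdot HF^+(-Y)$, the quantity $ht(\xi_0)$ is a well-defined non-negative integer (or $+\infty$); in particular $ht(\xi_0)\geq 0$.

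Next I would verify that the hypotheses of Theorem~\ref{main} are precisely those of the corollary: the singularity is normal and not rational, and its link $Y$ is a rational homology sphere. The theorem then yields $c^+(\xi_0)\notin \Im U$. Since $\Im U = U\cdot HF^+(-Y) = U^1\cdot HF^+(-Y)$, this says $c^+(\xi_0)\notin U^1\cdot HF^+(-Y)$, so $ht(\xi_0)<1$. Combined with the lower bound $ht(\xi_0)\geq 0$, this forces $ht(\xi_0)=0$.

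There is no real obstacle here: once Theorem~\ref{main} is in hand, the corollary is nothing more than an unpacking of the definition of the height of the $U$-tower. The entire content of the statement lies in the theorem; the corollary merely translates the non-divisibility $c^+(\xi_0)\notin \Im U$ into Karakurt's numerical language.
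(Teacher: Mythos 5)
Your proof is correct and takes exactly the route the paper intends: the paper simply declares the corollary an immediate consequence of Theorem~\ref{main}, and your argument is the straightforward unpacking of the definition of $ht$ that makes this explicit.
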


When starting this work, our initial goal was to use the height of tower
invariants (together with their monotonicity under Stein cobordisms \cite{Ka})
to obstruct certain deformations of surface singularities.
The above corollary
means, however, that the $ht$ invariant contains very little information about
the given singularity! (One could use $ht$ to show that rational
singularities
cannot be deformed into non-rational, but this is a well-known fact and a
special case of the semicontinuity of the geometric genus, see \cite{Elk}. The algebro-geometric proof of this fact is non-trivial, 
so the Heegaard Floer argument may still be of interest.)

Similarly to \cite{KO}, our proof also uses the interplay between Heegaard
Floer homology and lattice cohomology of \cite{Ne1, Ne2}. Lattice cohomology is
defined in a combinatorial way, using the intersection lattice of the plumbing
graph (the dual resolution graph of the singularity). Under certain rather
restrictive conditions (for example, for links of  AR singularities), the
Heegaard Floer homology and lattice
cohomology are known to be isomorphic \cite{OSplumb, Ne2}. For arbitrary
3-manifolds,
a
spectral sequence
from lattice homology to Heegaard Floer homology was found in \cite{OSS-ss};
this spectral sequence collapses in certain special cases, but in general,
isomorphism between Heegaard Floer and lattice (co)homologies has not been
established. The isomorphism between the  Heegaard Floer and lattice
theories  in the case of AR singularities is the key tool in Karakurt's
and Karakurt-\"Ozt\"urk's  proofs in \cite{Ka, KO}. Our approach
is different in that we only use 
an $\FF[U]$-equivariant map from the Heegaard Floer homology
to the lattice cohomology and do not require an
isomorphism, thus our argument works in general.  The homomorphism we use comes
from \cite{OSplumb} and maps $HF^+(-Y)$ to the $0$-dimensional part 
$\HH_0^+(\Gamma)$ of lattice cohomology. 
(The latter is much simpler that the full lattice cohomology 
$\HH^{\ast}(\Gamma)$; note that for AR-singularities,
lattice cohomology vanishes in dimensions $n>0$, \cite{NemLasz}.)
Another
difference between
our work and \cite{KO} is that we use general properties of graded roots
without resorting to Laufer sequences specific to the AR case. Finally, although this is not stated explicitly, the proofs in \cite{KO} only work if
   the minimal resolution of the singularity has exceptional divisor with normal crossings 
(see discussion in Section~\ref{s:plumbings} and Section~\ref{s:blowups}). This condition 
often fails, even for AR singularities; we give a proof for the general case.

It is intriguing that the proof of Theorem~\ref{main} works with lattice
cohomology to establish a statement about Heegaard Floer invariants, even in
the absence of isomorphism between the two theories. It would be very
interesting to find further similar applications of lattice cohomology. 

The paper is organized as follows. In Sections~\ref{s:plumbings} and \ref{s:latticehom}, we collect 
some background facts on resolutions and describe different constructions of lattice 
cohomology. In Section~\ref{s:Stein}, we prove Theorem~\ref{main} under the additional assumption that there is a {\em good} resolution that 
carries a Stein structure; 
in the general case we indicate how the proof of Theorem~\ref{main} follows from some technical lemmas that are relegated to Section~\ref{s:blowups}. 
Section~\ref{s:blowups} examines lattice homology in presence of blowups and completes the proof of Theorem~\ref{main}.

\subsubsection*{Acknowledgements:} We would like to thank the referee for pointing out a mistake in the first version of the article 
(Section 5 was added to correct that mistake). The second author thanks Marco Golla and Oleg Viro for some useful discussions and comments.

\section{Resolutions and plumbing graphs} \label{s:plumbings}

Consider a resolution  $\pi: \ti{\Sigma}
\to \Sigma$ of a normal surface singularity $(\Sigma, 0)$. The inverse image in $\ti{\Sigma}$ of a small neighborhood of $0$ in $\Sigma$ is a 4-manifold 
$X$ with $\d X=Y$, where $Y$ is the link of the singularity. If the resolution is minimal (i.e. $X$ has no smooth rational curves of self-intersection $-1$), by~\cite{BO} the manifold $X_{min}$
carries a Stein structure $J$ so that $(X, J)$ is a Stein filling for the canonical contact structure $\xi_0$ on
$Y$. 

Our main tool in this paper is lattice homology, an invariant constructed from a {\em good} resolution of $(\Sigma, 0)$. A resolution is good
if the irreducible components of the exceptional divisor
$\pi^{-1}(0)$ are smooth complex curves that
intersect transversely at double points only. (In other words, $\pi^{-1}(0)$ is
required to be a normal crossing divisor.)  The dual resolution graph $\Gamma$ is the graph whose vertices
correspond   to irreducible components of the exceptional divisor
and the edges record intersections of these components.
Each vertex is decorated with an integer weight equal to the self-intersection
of the corresponding curve. The resolution yields 
a 4-manifold $X(\Gamma)$ such that $\d X(\Gamma)=Y$, where $Y$ is the link of
singularity. For normal singularities, $X(\Gamma)$ is negative-definite, and
$Y$ is a rational homology sphere if and only if $\Gamma$ is a tree and each
vertex corresponds to a 2-sphere. (See for example
\cite[§2.1-2.2]{Ne2} for details.) The manifold $X(\Gamma)$ can be
obtained by plumbing disk bundles  over 2-spheres (with Euler numbers given
by weights of vertices) as dictated by the graph $\Gamma$, so $\Gamma$ is often called a plumbing graph. If the resolution of a normal singularity $(\Sigma, 0)$ is not good, 
it can still be encoded by a graph $\Gamma$, but one needs to record the singularities of the components 
of the exceptional divisor and specify multiple 
intersections. 

It is important to note that the minimal resolution of $(\Sigma, 0)$ does not have to be good, but one 
can always obtain a good resolution from a minimal resolution by some blowups.  
(This is a standard fact, see for example~\cite[Theorem V.3.9]{Hartshorne} for a closely related result. 
See also~\cite[§1]{NemFive} for discussion and examples; bad minimal resolutions 
can appear even in very simple situations.) Accordingly, a good resolution does not have to be minimal, so $X(\Gamma)$ is not Stein in general. 
However,  $X(\Gamma)$ always carries a symplectic form
$\omega_0$ such that $(X(\Gamma), \omega_0)$ is a strong symplectic filling for
$(Y, \xi_0)$. Indeed, $\ti{\Sigma}$ is K\"ahler since it lives  in  a  blowup 
of $\CC^N$,
in  particular, $\ti{\Sigma}$   has
a  symplectic  form $\omega_0$ compatible with the complex structure. For a good resolution, the irreducible components of the exceptional
divisor $\pi^{-1}(0)$ are smooth complex curves, thus they are symplectic surfaces in $X(\Gamma) \subset \ti{\Sigma}$ with respect to $\omega_0$.
The contact manifold  $(Y, \xi_0)$ is the
convex boundary of the plumbing $X(\Gamma)$ of these symplectic surfaces.



We  work with bad minimal resolutions and their good blowups in Section~\ref{s:blowups}. We  need a minimal resolution 
to use the Stein structure, but if $\pi^{-1}(0)$ is not a normal crossing divisor, $X_{min}$ cannot be used to define the lattice homology of the link.
We will need to perform some additional blowups 
on $X_{min}$ to obtain a complex surface $X$ which is a {\em good}  resolution  with plumbing graph $\Gamma$, and examine special features of 
the lattice homology 
construction  in presence of $(-1)$ vertices of $\Gamma$.

\section{Lattice Cohomology} \label{s:latticehom}

In this section, we discuss the necessary background on lattice cohomology,
\cite{OSplumb, Ne1, Ne2}. Lattice cohomology $\HH^{\ast}(\Gamma)$
was defined by N\'emethi in \cite{Ne2} as a combinatorial theory
conjecturally parallel to Heegaard Floer homology. Starting with a plumbing
graph $\Gamma$ that defines a 4-manifold with boundary $X(\Gamma)$,  N\'emethi's
construction uses cellular cohomology of certain $CW$-complexes associated
to the lattice $L = H_2(X(\Gamma), \ZZ)$ equipped with a weight function. We do
not give the general definition of  $\HH^{\ast}(\Gamma)$   here as
we will only work with its $0$-dimensional part $\HH_0^+(\Gamma)$. (The reader
will get a glimpse of the CW-complexes in
the graded roots discussion below.)
However, we will use several equivalent definitions of the
0-dimensional cohomology, those  from \cite{Ne2} and its precursors
\cite{OSplumb, Ne1}.  We also use specific isomorphisms between these
constructions, so we review this material in some detail. (Everything we need is
contained in \cite{Ne1} but some of the statements are implicit and somewhat
difficult to extract from \cite{Ne1}.)

As before, let $Y$ be a rational homology sphere which is a link of normal
surface singularity. Let $\Gamma$ be a negative-definite connected plumbing
graph as above, defining a $4$-manifold $X = X(\Gamma)$ with boundary $\partial
X = Y = Y(\Gamma)$.


Consider the lattice $L= H_2(X, \ZZ)$;  the intersection form on $L$ can be read
off the graph $\Gamma$. 
Indeed, the vertices of $\Gamma$ give a basis for $L$; $v$ will
usually denote both a  vertex and its corresponding homology class. Then,
the self-intersection $v \cdot v$ equals the weight decoration of the
vertex $v$, and for two different vertices $v, w$ we have $v \cdot w =1$ if
$v, w$ are connected by an edge in $\Gamma$, and $0$ otherwise. 

Set $L' = H^2(X, \ZZ)$ and $H = H_1(Y, \ZZ)$. Since $X$ has no $1$-handles, 
from Poincar\'{e} duality, the universal coefficient theorem and the  homology
exact sequence of the pair $(X, Y)$ we have 
\[L' = H^2(X, \ZZ) \simeq H_2(X, Y, \ZZ) \simeq \Hom(H_2(X, \ZZ), \ZZ),\] and
our assumption that 
$Y$ is a rational homology sphere gives 
 a short exact sequence $0 \rightarrow L \rightarrow L' \rightarrow H
\rightarrow 0$. We will use the map $\PD: L \rightarrow L'$ defined by composing
the
Poincar\'{e} duality $H_2(X, \ZZ) \rightarrow H^2(X, Y, \ZZ)$ with the
cohomological inclusion $H^2(X, Y, \ZZ) \rightarrow H^2(X)$.

Let $\Char (\Gamma) \subset H^2(X, \ZZ)$ be the set of characteristic vectors,
that is,
\[\Char = \Char(\Gamma) = \{ K \in L' : \langle K, x \rangle \equiv x\cdot x \ 
(\textrm{mod\ } 2)\ \  \forall x \in L \}, \] 
where $\langle K, x \rangle$ is the evaluation of $K \in L' = H^2(X, \ZZ) \simeq
\Hom(H_2(X, \ZZ), \ZZ)$ on $x \in H_2(X, \ZZ)$ and $x\cdot x$ is the
self-intersection of $x$ by the intersection form on $L = H_2(X, \ZZ)$.

We have $\Char = K + 2L'$ for any fixed $K \in \Char$. 
The natural action $K \mapsto K+2\PD(x)$ (for any $x \in L$) of $L$ on $\Char$
has orbits of form $K + 2\PD(L)$. We will denote an orbit of this form by $[K]
\subset \Char$.

Since $X(\Gamma)$ is simply connected, $\Char(\Gamma)$ is isomorphic to  the set
of  $Spin^c$ structures on $X$, and the
identification is given by the  first Chern class of the determinant line bundle
associated with a given $Spin^c$ structure (see e.g. \cite[Proposition
2.4.16]{GS}). If $\s$ is any fixed 
$Spin^c$ structure on $X$ and  $\ts=\s|_Y$ is its restriction to $Y$,
the $Spin^c$ structures on $X$ which restrict to $\ts$ are exactly 
those whose first Chern classes form an orbit of form $[K] = K + 2\PD(L)$,
where $K = c_1(\s) \in \Char(\Gamma)$.
Thus, $Spin^c$ structures on $Y$ can be identified
with orbits of the $L$-action on $\Char(X)$, and we will sometimes use
the notation
$\ts=[K] \in Spin^c (Y)$.

For any $K \in \Char$, we will also consider the (in general, rational) number
$K^2$ defined by using the intersection
pairing on 
$H^2(X; \QQ) \simeq H_2(X, Y; \QQ) \simeq H_2 (X, \QQ)$ (the latter isomorphism holds because $Y$ is a rational homology sphere). Rational coefficients are needed 
since $H^2(X; \ZZ) \simeq H_2(X, Y; \ZZ)$ doesn't have a well-defined intersection pairing.



All the lattice cohomologies
discussed below are taken with coefficients in $\FF=\ZZ/2$ and have the structure of $\FF[U]$-modules (these modules are
graded but we omit the gradings since they will not be important to us). See
Remark~\ref{Zcoeffs} for coefficients in $\ZZ$. 

Let $\T_0^+$
denote the module $\FF[U, U^{-1}]/U \cdot \FF[U]$. We will use the notation $1=
U^{0} \in \T_0^+$ for the corresponding generator.


\subsection{Lattice cohomology via functions on {Char}.}\label{ss:latchar} 
This is a review of the construction due to
Ozsv\'{a}th and Szab\'{o}, 
\cite[\textsection 1]{OSplumb}.

Define a weight function $w$ on $\Char$ by setting $w(K) = -(K^2+|\Gamma|)/8$,
where $|\Gamma|$ stands for the number of vertices in the plumbing graph,
i.e. the number of basis elements of $H_2(X, \ZZ)$ provided by the exceptional
divisors. 

\begin{definition}\label{OS-lattice}
The 0-dimensional lattice cohomology $\HH_0^+(\Gamma) \subset \Hom (\Char (\Gamma), \T_0^+)$ 
is the set of functions satisfying the following adjunction relations 
for characteristic vectors $K \in \Char (\Gamma)$ and vertices $v$ of $\Gamma$. 
If $n$ is an integer such that $2n = \langle K, v \rangle + v \cdot v$ 
(or, equivalently, $w(K) - w(K+2\PD(v)) = n$), we require 
for every $\phi \in \HH_0^+(\Gamma) \subset \Hom (\Char (\Gamma), \T_0^+)$ that  
\begin{equation}
\label{eq:compOzsSz}
\begin{aligned}
U^n \cdot \phi(K + 2 \PD [v]) & = \phi (K) & \text{ if } n \geq 0, \\
 \phi(K + 2 \PD [v]) & = U^{-n} \cdot \phi (K) & \text{ if } n \leq 0. 
\end{aligned}
\end{equation}
\end{definition}

We introduce $U$-action on
$\HH^+_0(\Gamma)$  by setting $(U\phi)(K) = U(\phi(K))$ for every characteristic vector $K \in \Char(\Gamma)$, thus  
$\HH^+_0(\Gamma)$ becomes an $\FF[U]$-module.




As the compatibility condition \eqref{eq:compOzsSz} above involves relations 
between elements of $\Char(\Gamma)$ that differ by an element in $2\PD(L)$,  
the $\FF[U]$-module $\HH^+_0(\Gamma)$ decomposes as a direct sum according to
the $Spin^c$ structures on $Y$:
\[ \HH^+_0(\Gamma) = \bigoplus_{\ts \in Spin^c(Y)} \HH^+_0(\Gamma, \ts). \]
We will use the notation $\HH^+_0(\Gamma, [K])$ to denote the direct summand of
the above decomposition which corresponds to the $Spin^c$ structure $\ts$. Here,
$[K]$ is the $L$-orbit formed by the first Chern classes of
$Spin^s$ structures on $X$ restricting to $\ts$ on $Y$. One can think of
elements of $\HH^+_0(\Gamma, [K])$ as 
functions in $\Hom([K], \T_0^+)$
satisfying the compatibility conditions \eqref{eq:compOzsSz}.

\subsection{Lattice cohomology via functions on homology lattice.}\label{ss:lattice} We now describe
a slightly different construction by N{\'e}methi, introduced in
\cite[Proposition 4.7]{Ne1}.

Given any characteristic element $K\in \Char(\Gamma)$, define the weight
function 
$\chi_K$ on $L$ such that for any $x \in L$ we set
\begin{equation}\label{weight-fn}
 \chi_K(x) = -\frac{1}{2}(\langle K, x \rangle + x\cdot x). 
\end{equation}
\begin{definition}\label{Nem-L-lattice}
For a fixed characteristic vector $K \in \Char(\Gamma)$, the lattice cohomology 
$\HH \LL_0^+(\Gamma, K) \subset \Hom (L,\T_0^+)$ is the set 
of functions $\varphi:L\to \T^+_0$ satisfying the following 
relations for elements $x \in L$ and vertices $v$ of
$\Gamma$. If $n$ is an integer such that $2n = \langle K, v \rangle + v \cdot v
+ 2 x\cdot v$, or, equivalently, if 
$\chi_K(x) - \chi_K(x+v) = n$
we require that  
\begin{equation}
\label{eq:compNem}
\begin{aligned}
U^n \cdot \varphi(x + v) & = \varphi (x) & \text{ if } n \geq 0, \\
 \varphi(x + v) & = U^{-n} \cdot \varphi (x) & \text{ if } n \leq 0. 
\end{aligned}
\end{equation}
\end{definition}

This is also naturally an $\FF[U]$ module by setting $(U\varphi)(x) =
U(\varphi(x))$.



\begin{lemma} \cite[Proposition 4.7]{Ne1}
\label{lem:charvsl}
\[ \HH \LL_0^+(\Gamma,K) \cong \HH_0^+(\Gamma, [K])
. \]
\end{lemma}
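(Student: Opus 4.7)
The plan is to construct an explicit bijection between $\HH\LL_0^+(\Gamma,K)$ and $\HH_0^+(\Gamma,[K])$ and verify that it respects both the compatibility relations and the $\FF[U]$-module structure. Since $Y$ is a rational homology sphere, the short exact sequence $0 \to L \to L' \to H \to 0$ shows that $\PD\colon L \to L'$ is injective, so every element of the orbit $[K] = K + 2\PD(L)$ can be written \emph{uniquely} as $K + 2\PD(x)$ for some $x \in L$. Hence the assignments
\[
\Psi(\varphi)(K + 2\PD(x)) \;=\; \varphi(x), \qquad \Psi^{-1}(\phi)(x) \;=\; \phi(K + 2\PD(x))
\]
give mutually inverse bijections between $\Hom(L, \T_0^+)$ and $\Hom([K], \T_0^+)$. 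Both are manifestly $\FF[U]$-linear because the $U$-action is defined pointwise in $\T_0^+$ in each setup.

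Next I would verify that $\Psi$ intertwines the two adjunction relations. Fix a vertex $v$ and $x \in L$, and let $K' = K + 2\PD(x) \in [K]$. The key computation uses $\langle \PD(x), v\rangle = x \cdot v$, which gives
\[
\langle K', v \rangle + v\cdot v \;=\; \langle K, v\rangle + 2(x\cdot v) + v\cdot v,
\]
so the integer $n$ appearing in the compatibility condition for $(K', v)$ in Definition~\ref{OS-lattice} coincides with the integer $n$ appearing in the condition for $(x, v)$ in Definition~\ref{Nem-L-lattice}. Moreover $K' + 2\PD(v) = K + 2\PD(x+v)$, so $\Psi(\varphi)(K' + 2\PD(v)) = \varphi(x+v)$ and $\Psi(\varphi)(K') = \varphi(x)$. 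Therefore the Char-relation at $(K', v)$ with exponent $n$ is literally the same equation as the $L$-relation at $(x, v)$ with the same $n$, in both the $n \geq 0$ and $n \leq 0$ cases.

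As a sanity check, I would also note the compatibility with the weight functions: a direct expansion gives $w(K + 2\PD(x)) = w(K) + \chi_K(x)$, which is consistent with the equivalent reformulations $w(K) - w(K+2\PD(v)) = n$ and $\chi_K(x) - \chi_K(x+v) = n$ stated in the two definitions. This confirms that the bijection is the correct one (and, if desired, preserves gradings, though gradings are not tracked here).

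There is no real obstacle: the lemma is a direct unwinding of definitions once one identifies the orbit $[K]$ with $L$ via $x \mapsto K + 2\PD(x)$. The only point requiring any care is the identification $\langle \PD(x), v\rangle = x\cdot v$, which is used to match the exponents $n$ on the two sides, and the injectivity of $\PD$, which is needed to ensure the parametrization of $[K]$ by $L$ is one-to-one so that $\Psi$ is well defined as an isomorphism rather than merely a surjection.
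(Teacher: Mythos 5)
Your proof is correct and follows essentially the same route as the paper: the paper defines the map $\iota_K^*(\phi)=\phi\circ\iota_K$ with $\iota_K(x)=K+2\PD(x)$ (the inverse of your $\Psi$) and matches the two adjunction conditions via the identity $\langle K+2\PD(x),v\rangle+v\cdot v=\langle K,v\rangle+v\cdot v+2\,x\cdot v$, exactly as you do. You also spell out explicitly (via injectivity of $\PD$ and the weight-function identity $w(K+2\PD(x))=w(K)+\chi_K(x)$) a couple of points the paper leaves implicit, but the argument is the same.
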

\begin{proof}
The isomorphism is constructed as follows.
Let $\iota_K: L \rightarrow [K] = K + 2\PD(L) \subset \Char$ be the mapping
defined by $\iota_K(x) = K + 2\PD(x)$.
Let $\iota_K^*: \HH^+_0(\Gamma, [K]) \rightarrow \HH\LL^+_0(\Gamma,K)$ be the
induced
dual map, that is, 
$\Hom(L, \T_0^+) \ni \varphi = \iota_K^*(\phi)$ for $\phi \in \Hom([K], \T_0^+)$
if $\varphi = \phi \circ \iota_K$.
This map is well-defined as the two compatibility conditions
\eqref{eq:compOzsSz} and \eqref{eq:compNem} 
correspond to each other: setting $K' = K + 2\PD(x)$, we see that for a 
basis element $v$ of $L$ corresponding to a vertex of the plumbing graph,
\[ \langle K', v \rangle + v \cdot v = \langle K, v \rangle + v \cdot v +
2x\cdot v. \]
In the language of the weight functions, this is exactly the fact 
$w(\iota_K(x+v)) - w(\iota_K(x)) = \chi_K(x+v) - \chi_K(x)$.
\end{proof}


\subsection{Lattice cohomology via graded roots}\label{ss:grr}
Here we review N\'emethi's  main construction from \cite[§ 4]{Ne1}.

Fix $K\in \Char (\Gamma)$ and again consider the weight function $\chi_K: L \to
\ZZ$ defined
by equation \eqref{weight-fn} above.
We consider sublevel sets of the function $\chi_K$ in the lattice~$L$.
For each $n \in \ZZ$, let $\bar{L}_{K, \leq n}$ be  a finite
1-dimensional cell complex whose $0$-skeleton is the set 
$$
L_{K, \leq n} = \{x
\in L: \chi_K(x) \leq n \},
$$ and the $1$-cells are constructed as follows. If
$x \in
L$ and $v$ is the basis element of  $L=H_2(X, \ZZ)$ corresponding to a
vertex of $\Gamma$, then we connect $x$ and $x+v$  by a unique
1-cell in $\bar{L}_{K, \leq n}$  whenever  $x$ and $x+v$ are both in $L_{K, \leq
n}$. Clearly, such cell complexes can be built as subsets of $L \otimes \RR$,
taking the 1-cells to be straight line segments connecting their endpoints. Then
we have  $\bar{L}_{K, \leq n} \subset  \bar{L}_{K, \leq m}$ for $n<m$.

Consider the set $\pi_0(\bar{L}_{K, \leq n})$ of the connected
components of  $\bar{L}_{K, \leq n}$, and let $C_w$ denote the 
component corresponding to $w \in \pi_0(\bar{L}_{K, \leq n})$. If $m>n$, each
$C_w$ is contained in a component $C_{w'}$ for some  $w' \in  \pi_0(\bar{L}_{K,
\leq m})$, and $C_{w'}$ may contain several distinct components of 
$\bar{L}_{K,
\leq n}$. These inclusion relations are codified by the {\em graded root} 
$(R_K, \chi_K)$, which is a graph with an integer-valued  grading function. 
The grading on the graph is closely related to the $U$-action on cohomology.

The vertices $\V(R_K)$  of $(R_K, \chi_K)$ are given by the set
$\cup _{n \in \ZZ} \pi_0(\bar{L}_{K, \leq n})$. The grading, $\V(R_K) \to \ZZ$,
still denoted by $\chi_K$, is defined by 
$\chi_K|_{\pi_0(\bar{L}_{K, \leq n})}=n$. 
Finally, all edges are obtained by connecting vertices of the form
$w_n \in \pi_0(\bar{L}_{K, \leq n}) $ and $w_{n+1} \in \pi_0(\bar{L}_{K, \leq
n+1})$ such that $C_{w_n} \subset C_{w_{n+1}}$, where the inclusion is
understood in the sense described above.

\begin{remark}\label{rem:rootgrading}
As we mentioned, the elements of $\Char(\Gamma)$ fall into equivalence classes of
form $[K]$ corresponding to $Spin^c$ structures on $Y$.
 It turns out that the graded roots corresponding to two
 characteristic elements $K, K'$ belonging to the same orbit (that is, $K - K' \in 2\PD(L)$) 
 are the same up to a grading shift,  so
one can associate a well-defined graded root $(R_{\ts}, \chi_{\ts})$
to a $Spin_c$ structure $\ts \in Spin^c(Y)$ if one fixes the grading so that
$\min \chi_{\ts}|_{R_{\ts}}=0$, see \cite[Section 4]{Ne1} for
details. 
As we do not work with absolute gradings on cohomology modules, we will not make the grading shift 
and will simply use the grading given by $\chi_K$.
\end{remark}

\begin{definition} Fix a characteristic element $K \in \Char$, let $\chi_K$  be
a weight function as in~(\ref{weight-fn}), and   consider the graded root  
 $(R_K, \chi_K)$ as above with the vertex set $\V= \V(R_K)$. 
The associated
$\FF[U]$ module $\HH(R, \chi)$ is  defined as the set of functions $\psi:\V \to
\T_0^+$ satisfying the condition 
\begin{equation}\label{H-condition}
U \cdot \psi(v) = \psi(w) \text{ if } v,w \text{ are
connected by an edge of } R \text{ and } \chi(v)< \chi(w).
\end{equation}
\end{definition}

Note that by the construction of the graded root, for $v, w$ as above
we have in fact $\chi(v) + 1 = \chi(w)$. As before, there is obvious $U$-action
on  $\HH(R, \chi)$, so that 
 $(U\psi)(v) = U(\psi(v))$. See \cite[Definition 3.5]{Ne1} and discussion
therein for details. 





\begin{lemma}\cite[Proposition 4.7]{Ne1}
\label{lem:latticevsgradedroot}
\[ \HH\LL_0^+(\Gamma, K) \cong \HH(R_K, \chi_K). \]
\end{lemma}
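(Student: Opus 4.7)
The plan is to construct an explicit $\FF[U]$-module isomorphism from the geometry of the sublevel sets $\bar L_{K,\leq n}$ that define the graded root. The forward map records on each vertex of $R_K$ the common $\varphi$-value on the corresponding component, suitably $U$-shifted so that the vertex's level is built into the formula.

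Given $\varphi \in \HH\LL_0^+(\Gamma,K)$ and a vertex $w$ of $R_K$ at level $n=\chi_K(w)$, corresponding to a connected component $C_w$ of $\bar L_{K,\leq n}$, I would set
$$\psi(w) := U^{n-\chi_K(x)}\varphi(x) \in \T_0^+$$
for any lattice point $x \in C_w$; this is legal because $\chi_K(x) \leq n$, so the $U$-power is nonnegative. For well-definedness, the key step is a single-edge identity: if $x, x+v$ are adjacent lattice points both lying in $C_w$ with $a = \chi_K(x)$, $b = \chi_K(x+v)$, and $m = a - b$, then \eqref{eq:compNem} gives $U^m\varphi(x+v) = \varphi(x)$ when $m \geq 0$ and $\varphi(x+v) = U^{-m}\varphi(x)$ when $m \leq 0$. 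In either case a one-line computation yields $U^{n-a}\varphi(x) = U^{n-b}\varphi(x+v)$, so telescoping along any edge-path inside $C_w$ shows that $\psi(w)$ does not depend on the choice of $x$. The edge condition \eqref{H-condition} is then immediate: at an edge of $R_K$ with $\chi(w')=\chi(w)+1$ we have $C_w\subset C_{w'}$, and for any $x\in C_w$ the definition gives $\psi(w') = U^{(n+1)-\chi_K(x)}\varphi(x) = U\cdot\psi(w)$. The $U$-equivariance of $\varphi\mapsto\psi$ is manifest.

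For the inverse, given $\psi$ I would define $\varphi(x) := \psi(w_x)$, where $w_x$ is the vertex of $R_K$ at level $\chi_K(x)$ corresponding to the component of $\bar L_{K,\leq \chi_K(x)}$ that contains $x$. To verify \eqref{eq:compNem} for adjacent $x, x+v$ with $a = \chi_K(x)$, $b = \chi_K(x+v)$, I would observe that $x$ and $x+v$ lie in a common component at level $\max(a,b)$, and that the nested sequence of components through levels $\min(a,b), \min(a,b)+1, \ldots, \max(a,b)$ forms a path of $|a-b|$ consecutive edges in $R_K$. Applying \eqref{H-condition} along this path yields $\psi(w_{\mathrm{upper}}) = U^{|a-b|}\psi(w_{\mathrm{lower}})$, which, once the signs are tracked, is exactly \eqref{eq:compNem}. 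A direct check from the two definitions then shows these constructions are mutually inverse.

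The hard part is the well-definedness of the forward map: the single-edge identity $U^{n-a}\varphi(x) = U^{n-b}\varphi(x+v)$ carries the whole content of the lemma, since it is precisely the translation of the compatibility \eqref{eq:compNem} from the language of values at lattice points into the language of values at components of sublevel sets. This is the very reason for passing to the graded root in the first place, so once this identity is verified the rest of the argument is bookkeeping.
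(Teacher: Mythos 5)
Your proof is correct and takes essentially the same approach as the paper: the map $\theta^*$ the paper defines (sending $\psi$ to $\varphi = \psi \circ \theta$, where $\theta(x)$ is the component of $x$ in $\bar L_{K,\leq \chi_K(x)}$) is precisely your ``inverse'' construction $\varphi(x) = \psi(w_x)$, and the paper defers the compatibility and bijectivity checks to N\'emethi's Proposition 4.7. You have simply filled in those checks explicitly, together with the explicit $U$-shifted formula $\psi(w) = U^{\,\chi_K(w) - \chi_K(x)}\varphi(x)$ for the forward direction.
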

\begin{proof}

The isomorphism of \cite[Proposition 4.7]{Ne1} is constructed as follows. For
an element $x \in L$ with $\chi_K(x)=n$, 
the map $\theta: L \rightarrow \mathcal{V}(R_K)$ associates to
$x$   the component of $\pi_0(\bar{L}_{K,\leq n})$
containing
$x$.

This induces a map $\theta^*: \Hom(\mathcal{V}(R_K), \T_0^+) \rightarrow \Hom(L, \T_0^+)$ given by $\theta^*(\psi) = \varphi$ if $\varphi = \psi \circ \theta$. 
One can check that this is indeed a well-defined mapping from $\HH(R_K, \chi_K)$
to $\HH\LL_0^+(\Gamma, K)$, as the compatibility conditions \eqref{eq:compNem} and
\eqref{H-condition} are matching. By some more work, it is also easy to see that
it is an isomorphism (for the details, see the proof of \cite[Proposition
4.7]{Ne1}). 
\end{proof}

We will also need a special property of the graded root corresponding to the
canonical class. Some care with notation and terminology is needed here: in \cite{Ne1},  N\'{e}methi defines the
canonical class $K_{can}$ as the first Chern class of the canonical line bundle, also uniquely determined  by the relations
\[ \langle K_{can}, v \rangle = - v \cdot v - 2 \]
for every basis element of $L$ corresponding to a vertex of $\Gamma$. We work instead with the anticanonical class  $K_0= c_1(TX, J)$.
If 
$\s_{can}$ is the  $Spin^c$ structure on $X$ induced by $J$, we have 
$K_0=c_1(\s_{can})$. Note that $\s_{can}$ is usually referred to as the canonical $Spin^c$ structure; its restriction to the 3-manifold $Y$ is the  $Spin^c$ structure
on $Y$ induced by the canonical contact structure $\xi_0$. We have 
\[ \langle  c_1(TX, J), v \rangle =  v \cdot v + 2 \] by adjunction,
and the relation between our  class $K_0$ and N\'{e}methi's
canonical class $K_{can}$ is $K_0 = - K_{can}$.  We will use \cite[Theorem 6.1(c, d)]{Ne1}, which  N\'{e}methi proves 
for $K_{can}$. However, $\chi_{K_0}(x) = \chi_{K_{can}}(-x)$ for any $x \in L$, and this symmetry implies that any
statements about the connected components of level sets with respect to these
two weight functions will be the same (cf. \cite[section 5.1]{Ne1}).
This enables us to state the lemma below for the class $K_0$.

\begin{lemma} \cite[Theorem 6.1(c, d)]{Ne1} \label{can-vertex}   Let $K_0 = c_1(TX, J) \in \Char(\Gamma)$. 

(1) Consider the sublevel set $\bar{L}_{K_0, \leq 0}$, and  let $C_0$ be its
connected component containing $0 \in L = H_2(X, \ZZ)$.
Then $C_0$ contains no points $x$ with $\chi_{K_0}(x)<0$, i.e. $\chi_{K_0}|_{C_0}$
is identically zero.  

(2) The sublevel set $\bar{L}_{K_0, \leq n}$ is connected for $n\geq 1$. 

(3) The graded root $(R_{K_0}, \chi_{K_0})$ has a distinguished vertex $w_0$ of valency one, which is the end vertex of an infinite (sub)chain consisting of vertices $w_0, w_1, w_2, \dots$ such that
$\chi_{K_0}(w_i) = i$ and there is an edge between $w_i$ and $w_{i+1}$ for every $i \in \mathbb{Z}^+_0$. Moreover, for every $i > 0$, the only vertex $v$ of the graded root with $\chi_{K_0}(v) = i$ is $v = w_i$. 
\end{lemma}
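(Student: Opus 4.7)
The plan is to combine the symmetry $\chi_{K_0}(x) = \chi_{K_{can}}(-x)$ noted earlier with N\'emethi's \cite[Theorem 6.1(c, d)]{Ne1}, which proves the corresponding statements for the canonical class $K_{can}$. Since the involution $x \mapsto -x$ on $L = H_2(X, \ZZ)$ interchanges the sublevel sets $\bar{L}_{K_{can}, \leq n}$ and $\bar{L}_{K_0, \leq n}$ while fixing $0$, both the identity $\chi_{K_0}|_{C_0} \equiv 0$ in (1) and the connectedness claim of (2) transfer directly from N\'emethi's result.

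Once (1) and (2) are in hand, (3) follows formally from the definition of the graded root in Section~\ref{ss:grr}. By (2), for each $n \geq 1$ the sublevel set $\bar{L}_{K_0, \leq n}$ has a single connected component, producing a unique vertex $w_n$ at level $n$ of $(R_{K_0}, \chi_{K_0})$, together with an edge between $w_n$ and $w_{n+1}$ coming from the inclusion $\bar{L}_{K_0, \leq n} \subset \bar{L}_{K_0, \leq n+1}$. The distinguished component $C_0$ at level $0$ yields a vertex $w_0$ that maps into $w_1$ under inclusion. By (1), no point of $C_0$ satisfies $\chi_{K_0} \leq -1$, so no component of $\bar{L}_{K_0, \leq -1}$ is contained in $C_0$; equivalently, $w_0$ has no edge going down to level $-1$, and is therefore of valency one.

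The main obstacle in a fully self-contained proof would be to establish the ``deep valley'' behavior of $\chi_{K_{can}}$ at $0$ underlying (1) and (2). The key ingredient is the incremental step formula $\chi_{K_{can}}(y+v) - \chi_{K_{can}}(y) = 1 - y \cdot v$, obtained from the adjunction relation $\langle K_{can}, v\rangle = -v \cdot v - 2$, which uses that each exceptional component is a $\mathbb{P}^1$ (forced by the rational homology sphere hypothesis on $Y$). By analyzing how this step evolves along paths in $L$ built from step-by-step addition and subtraction of basis vectors, one argues inductively that $\chi_{K_{can}}$ cannot strictly descend below $0$ within the component of $0$, and that above level $0$ a descent argument forces all points into a single component. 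I would relegate these technical steps to \cite{Ne1} and conclude via the symmetry above.
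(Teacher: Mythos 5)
Your proposal follows essentially the same route as the paper: parts (1) and (2) are imported from N\'emethi's Theorem 6.1(c, d) via the symmetry $\chi_{K_0}(x) = \chi_{K_{can}}(-x)$ (which fixes $0$ and identifies the relevant sublevel sets), and part (3) is then derived formally from the graded-root construction. Your unpacking of (3) — uniqueness of the vertex at each level $n \geq 1$ from connectedness, the edge $w_n$–$w_{n+1}$ from inclusion of sublevel sets, and valency one for $w_0$ because $C_0$ meets no sublevel set $\bar{L}_{K_0, \leq -1}$ — is exactly what the paper has in mind when it says part (3) ``directly follows.'' The concluding paragraph sketching what underlies N\'emethi's (1) and (2) is extra commentary that you correctly defer to the cited source.
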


The third part of the above lemma directly follows (using the construction of the graded root) from the first two parts which are explicitly stated in \cite[Theorem 6.1(c, d)]{Ne1}. The distinguished vertex $w_0$ is the connected component $C_0$ containing $0 \in L$ in $\pi_0(\bar{L}_{K_0,\leq 0})$, and the vertex $w_i$ for $i > 0$ is the single connected component of the connected sublevel set $\bar{L}_{K_0, \leq i}$.

We will call the infinite (sub)chain $w_0, w_1, w_2, \dots$ the \emph{main trunk} of the canonical graded root $(R_{K_0}, \chi_{K_0})$.
Note that the canonical graded root in general can have many complicated branches outside the main trunk (if the singularity is not rational, see the proof of Lemma~\ref{psi0} later), but those other branches, if present, connect to the main trunk at the level-one vertex $w_1$, see Figure~\ref{root-pic}.

\begin{center}
\begin{figure}[tb]
\includegraphics[width=0.4\textwidth]{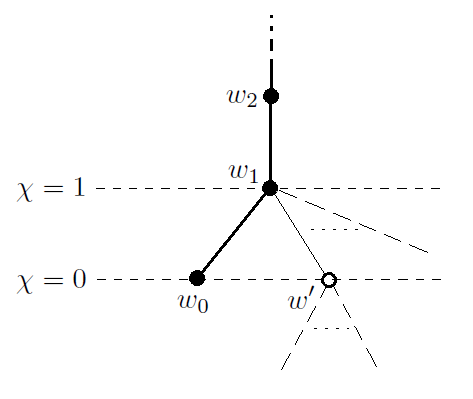}
\caption{A sketch of a graded root with its main trunk. At least one vertex $w'$
not on the main trunk is present on the 0-level, if the singularity is not
rational.} \label{root-pic}
\end{figure}
\end{center}



\section{The contact invariants: the Stein filling case} \label{s:Stein}

In this section, we prove Theorem~\ref{main}  in the case where the minimal resolution $X_{min}$ has a good graph $\Gamma$.
We build on ideas from \cite{Ka, KO}: in these papers, 
Karakurt and Karakurt-\"Ozt\"urk consider the case  where the Heegaard Floer homology is isomorphic to
$\HH_0^+(\Gamma)$ (namely, they work with AR-graphs, for the definition see
\cite[§8]{Ne1}),  and study  the image of the
contact invariant $c^+(\xi) \in HF^+(-Y)$  in the lattice homology under this
isomorphism. Even when the isomorphism no longer holds, we are able to apply a similar strategy.

First, we state a  lemma for an arbitary negative-definite plumbing graph $\Gamma$ (the minimality assumption is not needed yet).   
Let $W(\Gamma)$ be the cobordism from $S^3$
to $Y=Y(\Gamma)$ given by the plumbing graph; $W(\Gamma)$ is obtained by cutting a
small ball out of $X(\Gamma)$; $Spin^c$ structures on  $W(\Gamma)$ are
naturally identified with those on $X(\Gamma)$, and in turn with
$\Char(\Gamma)$. We can think of $W(\Gamma)$ as cobordism from $-Y$ to $S^3$. Let
$F^+_{W(\Gamma), \s}:
HF^+(-Y) \to HF^+(S^3)$ be the map on Heegaard Floer homology induced by the
$Spin^c$ cobordism  $(W(\Gamma), \s)$
(see \cite{OSfour}). Following \cite{OSplumb}, define the map 
$$
T^+: HF^+(-Y) \to \HH_0^+(\Gamma) 
$$
as follows: for $x \in   HF^+(-Y)$, let $T^+(x): \Char (\Gamma) \to \T_0^+$ be
given by 
$$
T^+(x)(K)= F^+_{W(\Gamma), \s}(x) \in HF^+(S^3)= \T_0^+,
$$
where $K$ is the element of $\Char$ associated with the $Spin^c$ structure $\s$.

\begin{lemma} \label{OS-map} \cite[Proposition 2.4]{OSplumb} The map $T^+$
induces an   $\FF[U]$-equivariant 
map from $HF^+(-Y(\Gamma), \ts)$ to $\Hom (\Char_{\ts}(\Gamma), \T_0^+)$, whose
image
lies in  $\HH_0^+(\Gamma, \ts)$. 
\end{lemma}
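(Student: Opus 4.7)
The plan is to verify three properties of $T^+$: that $T^+(x)$ is supported on characteristic vectors lying over $\ts$, that $T^+$ is $\FF[U]$-equivariant, and that its image satisfies the compatibility relations \eqref{eq:compOzsSz} cutting out $\HH_0^+(\Gamma, \ts)$. The first two are essentially formal. Since Heegaard Floer cobordism maps decompose according to the $Spin^c$ structure induced on each boundary component, for $x \in HF^+(-Y, \ts)$ one has $F^+_{W(\Gamma), \s}(x) = 0$ whenever $\s|_{-Y} \neq \ts$, so $T^+(x)$ vanishes outside the orbit $[K] = \Char_\ts(\Gamma)$. The $\FF[U]$-equivariance is immediate from the $\FF[U]$-linearity of each cobordism map: $T^+(Ux)(K) = F^+_{W(\Gamma), \s}(Ux) = U \cdot F^+_{W(\Gamma), \s}(x) = U \cdot T^+(x)(K)$.

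The substance of the lemma is the compatibility \eqref{eq:compOzsSz}. Fix a vertex $v$ and let $\s' = \s + \PD[v]$, so $c_1(\s') = K + 2\PD[v]$, and set $n = (\langle K, v\rangle + v\cdot v)/2$. I must establish $U^n F^+_{W(\Gamma), \s'}(x) = F^+_{W(\Gamma), \s}(x)$ when $n \geq 0$, and the dual identity when $n \leq 0$. My plan is to split the cobordism $W(\Gamma)$ as a composition passing through a single 2-handle cobordism $V_v$ attached along a curve representing the homology class $[v]$, apply the composition law for cobordism maps, and observe that all other pieces of the factorization contribute identically to the computations for $\s$ and $\s'$. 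This reduces the desired identity to the analogous statement for the elementary piece $V_v$: namely that the two maps induced on $HF^+$ by $Spin^c$ structures on $V_v$ differing by $\PD[v]$ are related by multiplication by $U^{|n|}$, with the sign of $n$ determining the direction.

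For the elementary case I would use the surgery exact triangle associated with the 2-handle attachment together with the adjunction inequality applied to the embedded sphere $S_v$ of square $v\cdot v$, which forces only finitely many $Spin^c$ structures to contribute nontrivially and pins down the relation between the surviving maps. The precise power of $U$ is dictated by the grading-shift formula $\deg F^+_{W, \s} = (c_1(\s)^2 - 2\chi(W) - 3\sigma(W))/4$, which gives
\[ \deg F^+_{W, \s'} - \deg F^+_{W, \s} \;=\; \frac{(K+2\PD[v])^2 - K^2}{4} \;=\; \langle K, v\rangle + v\cdot v \;=\; 2n, \]
matching the shift in absolute grading produced by multiplication by $U^n$. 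The main technical obstacle is the elementary identity itself, for which I would ultimately import the argument of \cite[Proposition 2.4]{OSplumb}; everything else in this lemma then follows by formal manipulations with the composition law and the $Spin^c$ decomposition.
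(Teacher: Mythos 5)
The paper does not actually prove this lemma---it is taken directly from \cite[Proposition 2.4]{OSplumb}, with only the remark that the statement uses basic properties of Heegaard Floer cobordism maps and no special features of $\Gamma$. Your formal preliminaries match this: the $Spin^c$-decomposition of $T^+$ and its $\FF[U]$-equivariance are indeed formal consequences of the corresponding properties of the cobordism maps $F^+_{W, \s}$, and your grading-shift computation confirming $\deg F^+_{W,\s'} - \deg F^+_{W,\s} = 2n$ is a correct consistency check.

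The gap is in the reduction you sketch before deferring to \cite{OSplumb}. Factoring $W(\Gamma)$ through a single $2$-handle cobordism $V_v$ and claiming that ``all other pieces of the factorization contribute identically to the computations for $\s$ and $\s'$'' does not work: if $W'$ denotes the complementary piece, $H_2(W')$ is spanned by the remaining vertex classes $w$, and $\PD[v]$ restricted to $W'$ evaluates as $w \mapsto v\cdot w$, which is nonzero whenever $v$ has a neighbor in $\Gamma$. Equivalently, the sphere $S_v$ (core of the $v$-handle glued to a Seifert surface of its attaching circle) is not contained in the single $2$-handle piece $V_v$, so the $Spin^c$ difference $\PD[v]$ is not localized there, and $\s, \s'$ do \emph{not} restrict identically to the complementary piece. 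Hence the composition law does not reduce \eqref{eq:compOzsSz} to an elementary identity for $V_v$ in the way you describe; nor is it clear that the surgery exact triangle together with the adjunction inequality would, even in that elementary case, pin down the exact $U$-power rather than merely a vanishing range. Since you say yourself that you would ``ultimately import the argument of \cite[Proposition 2.4]{OSplumb},'' the conclusion is unaffected, but the intermediate factorization as written does not go through and should be dropped or reformulated (e.g.\ in terms of the adjunction relations for cobordism maps in the presence of an embedded sphere of negative square, which is what the cited proof actually uses).
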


Again, it's important to note that the above lemma only uses
basic properties of Heegaard Floer cobordism maps and requires no additional
assumptions on the negative-definite graph $\Gamma$. (This map is an isomorphism for
AR-graphs, see \cite[Theorem 8.3]{Ne1}.)

We would like to find an explicit lattice-homological description of the element $c=T^+(c^+(\xi_0))\in \HH_0^+(\Gamma)$.
In the next lemma, we will do so under
the additional assumption that  the graph $\Gamma$ contains no $(-1)$ vertices, i.e. 
$X(\Gamma)$ is minimal and thus gives a Stein filling of $Y(\Gamma)$ (see Section~\ref{s:plumbings}).
In the next section, we will expand the argument to the general case.

\begin{lemma}
\label{lem:canhom-min} Assume that the graph $\Gamma$ contains no $(-1)$ vertices, i.e. $X(\Gamma)$ is Stein. 
Consider the element $c=T^+(c^+(\xi_0))\in \HH_0^+(\Gamma)$. 
Let $K_0 = c_1(TX, J)$. 
(1) The element $c$ is given by  a function 
$\phi_0 \in \HH_0^+(\Gamma)$ such that
$\phi_0(K_0)=1$ in the degree $0$ part of $\T_0^+$ and $\phi_0(K)=0$ for any
other characteristic class $K \neq K_0$. 
In particular, $c \in \HH^+_0(\Gamma, [K_0])$.

(2) Under the isomorphism $i_{K_0}$ of Lemma~\ref{lem:charvsl}, the function
$\phi_0$ corresponds to the  function $\varphi_0 \in \HH\LL_0^+(\Gamma, K_0)$
such that $\varphi_0(0) = 1 \in \T^+_0$ in degree $0$ and $\varphi_0(x) = 0$  
for any $x\neq 0$, $x\in L$.

(3) Under the isomorphism $\theta^*$ of Lemma~\ref{lem:latticevsgradedroot}, 
the function $\varphi_0$ corresponds to the function $\psi_0: \V(R_{K_0}) \to
\T_0^+$
such that $\psi_0(w_0)$ is the generator of $\T_0^+$ in degree $0$, and
$\psi_0=0$
for all other vertices of $R_{\ts_{can}}$. The vertex $w_0$ corresponds to the connected component of $0$ in the homology lattice.
\end{lemma}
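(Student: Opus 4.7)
The plan is to compute $c = T^+(c^+(\xi_0))$ in each of the three equivalent models of lattice cohomology in turn, starting from $\HH_0^+$ on $\Char(\Gamma)$ and then transporting via the isomorphisms of Lemmas~\ref{lem:charvsl} and~\ref{lem:latticevsgradedroot}. For (1), the main input is the Plamenevskaya-type identification of the Stein contact invariant: since $\Gamma$ has no $(-1)$-vertices, $X(\Gamma)$ is Stein with $Spin^c$-structure $\s_J$ of first Chern class $K_0$, and the cobordism map $F^+_{W(\Gamma), \s_J}\colon HF^+(-Y)\to HF^+(S^3)=\T_0^+$ sends $c^+(\xi_0)$ to the top generator $1\in \T_0^+$ in degree $0$, while $F^+_{W(\Gamma), \s}(c^+(\xi_0))=0$ for every other $\s$ on $W(\Gamma)$ restricting to $\ts_{\xi_0}$ on $Y$. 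By definition of $T^+$, this gives $\phi_0(K_0)=1$ and $\phi_0(K)=0$ for $K\in [K_0]$ with $K\neq K_0$; by Lemma~\ref{OS-map}, $c$ also lies in the summand $\HH_0^+(\Gamma,[K_0])$, so $\phi_0$ vanishes on characteristic vectors outside $[K_0]$ as well.

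Part (2) follows by unwinding the isomorphism $\iota_{K_0}^*$ of Lemma~\ref{lem:charvsl}: $\varphi_0(x)=\phi_0(K_0+2\PD(x))$ equals $1$ at $x=0$ and vanishes for $x\neq 0$, using the injectivity of $\PD\colon L\to L'$ (from non-degeneracy of the intersection form on the negative-definite $L$). For (3), we pull back $\varphi_0$ via $\theta^*$ of Lemma~\ref{lem:latticevsgradedroot}. Any such pullback is automatically constant on each connected component of each sublevel set $\bar L_{K_0,\leq n}$, so for the proposed $\psi_0$ to recover $\varphi_0$ we need the component $C_0$ of $0$ in $\bar L_{K_0,\leq 0}$ to reduce to $\{0\}$; on all other components $\varphi_0$ is identically zero, which already matches $\psi_0\equiv 0$ there. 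Here the absence of $(-1)$-vertices is essential: by adjunction $\chi_{K_0}(v)=-v\cdot v-1\geq 1$ for every basis vertex $v$ (as $v\cdot v\leq -2$), so no basis vector lies in $L_{K_0,\leq 0}$, the origin has no $1$-cell neighbour in $\bar L_{K_0,\leq 0}$, and $C_0=\{0\}$. Finally, the compatibility~\eqref{H-condition} is trivial on every edge of $R_{K_0}$ not incident to $w_0$ (both endpoints have $\psi_0=0$), and by Lemma~\ref{can-vertex}(3) the only edge incident to $w_0$ is the edge $w_0 w_1$ of the main trunk, where the compatibility reads $U\cdot \psi_0(w_0) = U\cdot 1 = 0 = \psi_0(w_1)$.

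The main obstacle is the input of (1), specifically the vanishing $F^+_{W(\Gamma),\s}(c^+(\xi_0))=0$ for $\s\neq\s_J$ in the orbit $[\s_J]$: this is a refinement of Plamenevskaya's basic non-vanishing identity for Stein fillings, which can be supplemented by a grading argument using $c^+(\xi_0)\in \Ker U$ together with the shift formula for the cobordism map (the image lies in $\Ker U$ which sits in degree $0$, whereas the shift places $\phi_0(K_0+2\PD(x))$ in degree $-2\chi_{K_0}(x)$). The passage to (2) and (3) is then straightforward once the identification $C_0=\{0\}$ is in place, and this identification relies crucially on minimality---this is why Section~\ref{s:blowups} is needed to treat non-minimal good resolutions separately.
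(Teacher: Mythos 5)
Your proof follows essentially the same route as the paper: part (1) is exactly \cite[Theorem 4]{Pla} (which, as cited in the paper, asserts both the non-vanishing at $\s_{can}$ and the vanishing at every other $Spin^c$-structure on $W(\Gamma)$, so your supplementary grading argument is not needed), and parts (2)--(3) unwind the isomorphisms of Lemmas~\ref{lem:charvsl} and~\ref{lem:latticevsgradedroot}, with your observation that $C_0=\{0\}$ matching the paper's Remark~\ref{rem:C=0}. One small imprecision: the $1$-cell neighbours of $0$ in $\bar L_{K_0,\le 0}$ are the points $\pm v$ for basis vertices $v$, and you only verify $\chi_{K_0}(v)\ge 1$; however $\chi_{K_0}(-v)=1$ holds unconditionally (regardless of $v\cdot v$), so the conclusion is still correct.
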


\begin{proof}
The argument is essentially the same as Karakurt's observation in \cite{Ka},
based on the main theorem of \cite{Pla}.
Indeed, the homomorphism $c=T^+(c^+(\xi_0))$ is defined by 
$$
c(K)= F^+_{W(\Gamma), K}(c^+(\xi_0)) \in HF^+(S^3)= \T_0^+.
$$
By \cite{OScont},  $c^+(\xi_0) \in HF^+(Y, \s_{can}|_{Y})$ , 
so it follows immediately that $c \in
\HH^+_0(\Gamma, [K_0])$ since
the map $T^+$ respects $Spin^c$ structures.  

By our assumption,  $X(\Gamma)$ carries a Stein structure $J$ so
that $(X, J)$ is a Stein filling for the canonical contact structure $\xi_0$ on
$Y$. In this case   
\cite[Theorem 4]{Pla} asserts that 
$F^+_{W(\Gamma), \s_{can}}(c^+(\xi_0))$ is the generator
of $\T_0^+$ in degree $0$, and  $F^+_{W(\Gamma), \s}(c^+(\xi_0))=0$ for
any other $Spin^c$-structure $\s$ on $W(\Gamma)$. Since $c_1(\s_{can})=  c_1(TX, J)=K_0$ by
our definition of the class $K_0$, this means that $c=\phi_0$.

Parts (2) and (3) of the lemma are immediate from the definitions of
isomorphisms $\HH_0^+(\Gamma, [K_0])
\simeq
\HH\LL_0^+(\Gamma, K_0)
\simeq \HH(R_{K_0}, \chi_{K_0})$ of Lemmas~\ref{lem:charvsl} and 
\ref{lem:latticevsgradedroot}.
\end{proof}

\begin{remark} \label{rem:C=0} Under the hypothesis that $\Gamma$ contains no $(-1)$ vertices, the connected component $C_0$ of $0$ in the  level set $\overline{L}_{K_0, \leq 0}$
consists of a single point, ie $C_{0}=\{ 0 \}$. Indeed, due to the isomorphism of Lemma \ref{lem:latticevsgradedroot}, the function $\varphi_0$ 
must vanish on the entire component $C_0$. It is also easy to check that $C_{0}=\{ 0 \}$ directly as follows.  With the 
notation of Section~\ref{ss:lattice},  if $v$ is a basis element, 
we use the identity $\langle K_0, v \rangle =  v \cdot v  + 2$ to see that
\[ \chi_{K_0}(x) - \chi_{K_0}(x \pm v) = \pm  x \cdot  v  + 
\begin{cases}
v \cdot  v  + 1 \textrm{\   or} \\
 -1.
\end{cases} \]
If $\Gamma$ has no $(-1)$ vertices, the inequality $ v \cdot v  \leq -2$ holds for any basis element $v$. 
Then  for $x = 0$, we get $\chi_{K_0}(0) = 0$ but $\chi_{K_0}(\pm v) > 0$ for any basis element $v$, so $0 \in L$ is a single point in its 
connected component of the level set  $\overline{L}_{K_0, \leq 0}$. 
\end{remark}

In the next section, we examine the general case where $X(\Gamma)$ may not be Stein.  We will see that  Parts~(1) and (2) of Lemma~\ref{lem:canhom-min} no longer hold. 
However, it turns out that the zero set of the function  $\varphi_0$ still matches the connected component $C_0$, so that Part~(3) of Lemma~\ref{lem:canhom-min}
holds in general. See Section~\ref{s:blowups}.

We now return to graded roots to establish a useful property of the function $\psi_0$.

\begin{lemma} \label{psi0} Let $w_0$ be the distinguished vertex of the canonical graded root $(R_{K_0}, \chi_{K_0})$ in the sense of Lemma~\ref{can-vertex}.
Consider  $\psi_0: \V(R_{K_0}) \to \T_0^+$
such that $\psi_0(w_0)=1 \in \T_0^+$  and
$\psi_0(v)=0$
for all other vertices $v$ of $R_{K_0}$. Then $\psi_0 \in \HH(R_{K_0},
\chi_{K_0})$, and $\psi_0 \in \Ker U$. Moreover,  $\psi_0 \in \Im U$ if
and only if the singularity is rational. 
\end{lemma}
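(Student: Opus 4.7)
The plan is to first verify that $\psi_0$ defines a valid element of $\HH(R_{K_0},\chi_{K_0})$ that lies in $\Ker U$, and then analyze when $\psi_0$ belongs to the image of the $U$-action.

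For the first step, I would invoke Lemma~\ref{can-vertex}(3): since $w_0$ has valency one in $R_{K_0}$, with $w_1$ as its unique neighbor, the only nontrivial instance of condition~\eqref{H-condition} for $\psi_0$ is on the edge $w_0\to w_1$, where it reads $U\cdot \psi_0(w_0)=\psi_0(w_1)$. This holds because $U\cdot 1=0$ in $\T_0^+=\FF[U,U^{-1}]/U\FF[U]$ and $\psi_0(w_1)=0$. Every other edge of $R_{K_0}$ joins vertices at which $\psi_0$ vanishes, so compatibility is automatic. The identity $U\cdot \psi_0(w_0)=0$ simultaneously shows $\psi_0\in\Ker U$.

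For the equivalence $\psi_0\in\Im U\iff(\Sigma,0)$ is rational I would treat the two directions separately. For the easy direction, assume $(\Sigma,0)$ is rational: then by N\'emethi's characterization of rationality via the canonical graded root (equivalently, Artin's criterion together with the symmetry $\chi_{K_0}(x)=\chi_{K_{can}}(-x)$), $R_{K_0}$ reduces to its main trunk $w_0,w_1,w_2,\dots$. I would then exhibit a preimage by $\psi_1(w_i):=U^{i-1}\in\T_0^+$: compatibility along $w_i\to w_{i+1}$ is $U\cdot U^{i-1}=U^i=\psi_1(w_{i+1})$, and $(U\psi_1)(w_0)=U\cdot U^{-1}=1$ while $(U\psi_1)(w_i)=U^i=0$ for $i\geq 1$, so $U\psi_1=\psi_0$.

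For the hard direction, assume $\psi_0=U\psi_1$ and that $(\Sigma,0)$ is not rational, aiming for a contradiction. The non-rationality provides (Figure~\ref{root-pic}; cf.~\cite{Ne1}) an extra vertex $w'\neq w_0$ at level $0$ of $R_{K_0}$; by Lemma~\ref{can-vertex}(2), the unique level-one vertex is $w_1$, so $w'$ is joined to $w_1$ by an edge. From $(U\psi_1)(w')=\psi_0(w')=0$ one deduces $\psi_1(w')\in\Ker U=\FF\cdot 1\subset\T_0^+$, and then compatibility on $w'\to w_1$ forces $\psi_1(w_1)=U\cdot\psi_1(w')\in U\cdot\FF\cdot 1=\{0\}$. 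On the other hand, compatibility on $w_0\to w_1$ combined with $(U\psi_1)(w_0)=\psi_0(w_0)=1$ forces $\psi_1(w_1)=U\cdot\psi_1(w_0)=1$, contradicting $\psi_1(w_1)=0$. The main (really, only) obstacle is justifying the existence of the extra level-zero vertex $w'$ in the non-rational case; I would simply cite this as the graded-root reformulation of Artin's criterion on arithmetic genera of effective cycles.
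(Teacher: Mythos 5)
Your proposal is correct and follows essentially the same route as the paper: verification of compatibility at the valency-one vertex $w_0$, the characterization of rationality via the main-trunk structure of the canonical graded root (what you call the graded-root form of Artin's criterion is precisely N\'emethi's~\cite[Theorem 6.3]{Ne1} invoked in the paper), and a contradiction obtained by tracking $\psi_1(w_1)$ along the two edges $w_0\to w_1$ and $w'\to w_1$. The only cosmetic differences are that you make the preimage $\psi_1(w_i)=U^{i-1}$ explicit for the rational direction (the paper just calls this ``easy to see'') and you take a small detour through $\Ker U=\FF\cdot 1$ where the paper combines the same two equations more directly.
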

\begin{proof} We need to check that $\psi_0$ satisfies the compatibility
conditions \eqref{H-condition} which is immediate because the 
generator 
$1 \in \T_0^+$ is annihilated by $U$ and by Lemma~\ref{can-vertex} part (1),
there is no vertex $v$ of the graded root connected to $w_0$ such that $\chi(v)
< \chi(w_0)$ ($w_0$ is  valency-one vertex of the graded root). 
Similarly, $\psi_0 \in \Ker U$ follows from the relations \eqref{H-condition}.
(Alternatively, one can use the fact that by Lemma~\ref{lem:canhom-min} and Lemma~\ref{lem:component}, $\psi_0$ is the image of $c^+(\xi_0)$
under the map from Heegaard Floer to lattice homology, and $c^+(\xi_0)\in \Ker U$ in Heegaard
Floer homology by \cite{OScont}.)

By \cite[Theorem 6.3]{Ne1}, the singularity is rational if and only if $\HH(R_{K_0}, \chi_{K_0}) = \T_0^+$, 
and this happens exactly when the graded root is a single infinite chain with the end vertex $w_0$, that is, the graded root consists of nothing else but the main trunk.

Therefore, if the singularity is rational, it is easy to see that $\psi_0 \in
\Im U$. 
If the singularity is not rational, the graded root $R_{K_0}$ has non-trivial
branches, i.e., at least one vertex $v \neq w_i\  (i \in
\mathbb{Z}^+_0)$  outside its main trunk. 
Recall that $w_1$ is the (unique) vertex
connected to $w_0$ by an edge in $R_{K_0}$ and $\chi_{K_0}(w_1)=1$.
By Lemma~\ref{can-vertex}, all the vertices not on the main trunk must have non-positive $\chi_{K_0}$-value, so there exists
a vertex $w'\neq w_0$ such that $\chi(w')=0$ and $w'$ is connected to $w_1$, see  
Figure~\ref{root-pic}.
 Now, suppose that $\psi_0 = U\psi$ for some 
$\psi \in \HH(R_{K_0})$. Then $\psi(w_1)= U \psi(w_0)= \psi_0(w_0) \neq
0$ and $\psi_0(w')= U \psi(w')= \psi(w_1) \neq 0$, which is a contradiction
because we defined $\psi_0(v)=0$ for all vertices $v \neq w_0$. 
\end{proof}

\begin{remark}
The above argument is similar to \cite[§5.8]{KO}, but  
Karakurt-\"Ozt\"urk in  \cite{KO} use Laufer
sequences, an approach that only works in the special case of AR-singularities. 
Instead, we rely on the general graded root defined in \cite{Ne1} for any
negative definite rational homology sphere plumbed manifold.
\end{remark}

\begin{proof}[Proof of Theorem \ref{main}] The result follows immediately from
Lemmas~\ref{psi0},~\ref{lem:canhom-min},~\ref{OS-map} in the Stein case. Similarly, in the general case the proof follows 
from   Lemmas~\ref{psi0},~\ref{OS-map}, and Lemma~\ref{lem:component} established in the next section.
Indeed, for the canonical $Spin^c$ structure~$\ts$ there is an
$\FF[U]$-equivariant map 
  $HF^+(-Y(\Gamma), \ts) \to \HH_0^+(\Gamma, [K_0]) \simeq \HH\LL_0^+ (\Gamma,
K_0)
  \simeq \HH (R_{K_0}, \chi_{K_0})
  $ 
mapping
$c^+(\xi_{0})$ to $\psi_0$.
Therefore, if $\psi_0 \notin \Im U$, then
$c^+(\xi_{0})\notin \Im U$. 
\end{proof}


\section{The contact invariants in presence of blowups} \label{s:blowups}


In this section, we address the case where the minimal resolution $X_{min} \to \Sigma$ of the singularity $(\Sigma, 0)$ does not have normal crossings. 
As discussed in Section~\ref{s:plumbings}, we can perform some additional blowups to obtain a normal crossing resolution $X=X(\Gamma)$, so that 
the resolution graph $\Gamma$ is good.  
The blowups are performed in several steps; at every step we blow up one or several points simultaneously. This gives a sequence of complex surfaces
\begin{equation} \label{blowupdown}
X=X_1 \to X_2 \to  X_3 \to \dots X_{min} \to \Sigma,
\end{equation}
where the maps are the corresponding blowdowns. It will be convenient, even if not strictly necessary, to assume that blowdowns are performed simultaneously whenever the surface contains 
several  rational curves with self-intersection $-1$.  (Note that in a surface with negative-definite intersection form, two distinct complex
curves with self-intersection $-1$ must be disjoint.)  The graph $\Gamma$ encodes the exceptional divisor of the composite map 
$X \to \Sigma$.  If $\ti Z \to Z$ is a blowup at a single point, we have $H_2(\ti Z) = H_2(Z) \oplus H_2(\overline{\CP^2})$, where the second summand corresponds to the exceptional 
divisor, and the map $H_2(Z) \hookrightarrow H_2(\ti Z)$ is induced by the inclusion of the punctured copy of $Z$ into $\ti Z$. 
Thus, we have homology inclusions $H_2(X_r) \hookrightarrow H_2(X)$ for each $r$. 

We introduce notation for the components of the exceptional divisors at different stages of the blowup~(\ref{blowupdown}), as follows. Let $D^1_1, D^1_2, \dots, D^1_{k(1)}$ 
be the collection of disjoint rational curves of self-intersection $-1$ in $X_1$ that are blown down to obtain $X_2$,  
and  let $E^1_1, E^1_2, \dots, E^1_{k(1)}$ denote the corresponding vertices of $\Gamma$. 
At the second step, there is a collection of disjoint exceptional curves $D^2_1, D^2_2, \dots, D^2_{k(2)}$ in $X_2$;  these are 
simultaneously blown down to obtain $X_3$. Under the blowup  $X_1 \to X_2$, the strict transforms 
of $D^2_1, D^2_2, \dots, D^2_{k(2)}$ become components of the exceptional divisor in $X_1$  encoded by $\Gamma$, so they correspond to certain 
vertices $E^2_1, E^2_2, \dots, E^2_{k(2)}\in \Gamma$. (Under our assumption,  $D^2_j$ could not be blown down in $X_1$, so 
$E^2_j \cdot E^2_j \leq -2$, and the self-intersection increases in $X_2$ as a result of the blowdown.) 
Inductively, we blow down a collection  $D^r_1, D^r_2, \dots, D^r_{k(r)}$ of rational curves with self-intersection $-1$ in $X_r$ to obtain the surface $X_{r+1}$, 
until we arrive to the surface $X_{R+1}=X_{min}$ after $R$ steps. For each surface, we have a map $X\to X_r$ given by~(\ref{blowupdown}); for the 
divisors $D^r_1, D^r_2, \dots, D^r_{k(r)} \subset X_r$, their strict transforms  in $X$ correspond 
to some components of the exceptional divisor of the map  $X \to \Sigma$. This exceptional divisor is encoded by $\Gamma$; let $E^r_1, E^r_2, \dots, E^r_{k(r)}$ denote 
the vertices of $\Gamma$ that correspond to (the strict transforms of) $D^r_1, D^r_2, \dots, D^r_{k(r)}$.

Recall that the collection of divisors in $X$ corresponding to all vertices of the graph $\Gamma$ gives a basis of $H_2(X)$. 
Some, but not all, of these vertices appear in the sets $\{E^r_1, E^r_2, \dots, E^r_{k(r)}\}$
above. The vertices that do not appear in these sets correspond to classes that come from  $H_2(X_{min})$. 

\begin{remark} By construction, the smooth complex curve  $D^r_j$ lies in $X_r$. Its preimage under  the map 
$X \to \dots \to X_r$ is the total transform $\ti D^r_j \subset X$; generally, this divisor is reducible. 
The image of the 
homology class of $D^r_j$ under the inclusion $H_2(X_r) \hookrightarrow H_2(X)$ is the homology class of the total transform; we use the same notation $D^r_j$ for this
class in $H_2(X)$. Note that when the total transform consists of several components and is not smooth, 
this class  cannot be realized by a smooth complex curve in $X$: if $D$ were a smooth complex representative, 
then $D$ would non-negatively intersect all components of the total transform  $\ti D^r_i$, contradicting $D \cdot \ti D^r_i= D \cdot D = -1$.
By contrast, each class $D^r_j$ can be realized by a smooth {\em symplectic} sphere in $X$ with self-intersection $-1$, so that all these symplectic spheres 
(for all $r, j$)  are 
pairwise disjoint. (Recall that the complex surface  $X$ has a compatible symplectic structure $\omega_0$, see section~\ref{s:plumbings}.) 
Indeed, we start with the first blowup in $X_{min}$: for the map $X_R \to X_{min}=X_{R+1}$, the exceptional  
divisors  $D^R_1, D^R_2, \dots, D^R_{k(R)}$ in $X_R$ are disjoint smooth complex (and thus symplectic) curves. Next, we blow up $X_R$ at one or more points to obtain 
$X_{R-1}$.  If any of the blown-up points lie in the curves  $D^R_1, D^R_2, \dots, D^R_{k(R)}$ in $X_R$, we can push each such divisor $D^R_j$ off these points by a
$C^{\infty}$-small isotopy in $X_R$. 
Since being a symplectic surface is an $C^{\infty}$-open condition, the perturbed curves will be symplectic. 
These symplectic spheres are now unaffected 
by the blowups  and thus remain smooth in $X_{R-1}$; they are obviously disjoint from the new exceptional divisors $D^{R-1}_j$ in $X_{R-1}$ 
(the curves $D^{R-1}_j$ are the 
preimages of the blown-up points under $X_{R-1} \to X_R$). We continue this process: before blowing up $X_{R-1}$ to get $X_{R-2}$,
we perturb any of the affected spheres  ($D^R_j$ or $D^{R-1}_j$) off the blown-up points, and so on, until we arrive at $X=X_1$ and a collection of disjoint symplectic 
spheres representing homology classes $D^r_j \in H_2(X)$ for all $r,j$.
\end{remark}

Let 
\begin{equation}\label{Dclasses}
\mathcal{D} = \{ D^1_1, D^1_2, \dots, D^1_{k(1)},   D^2_1, D^2_2, \dots, D^2_{k(2)}, \dots,  D^{R}_1, D^{R}_2, \dots, D^R_{k(R)} \} \subset H_2(X)
\end{equation}
be the set of homology classes $D^m_j$ in $X$ costructed above. Let $\mathcal{S} \subset H_2(X)$  be the set of sums 
$D_{i_1}+ D_{i_2}+ \dots+ D_{i_r}$
of distinct elements from $\mathcal{D}$,
where $\{D_{i_1}, D_{i_2}, \dots D_{i_r}\}$ ranges over subsets of $\mathcal{D}$. Note that $\mathcal{S}$ contains 0 (which corresponds to the empty subset). 
In other words, 
\begin{equation} \label{Sclasses}
\mathcal{S} = \{ \sum_{n,i} \varepsilon^n_i D^n_i: D^n_i\in\mathcal{D}, \varepsilon^n_i=0,1 \} \subset H_2(X). 
\end{equation}

We will need to express the homology classes in $\mathcal D$ in terms of the basis elements $E^n_i \in H_2(X)$ corresponding to the vertices of $\Gamma$. 
It is convenient to use the notion of proximity (we tweak the usual definition of proximity of points to talk about proximity of divisors). 
Consider  vertices $E^a_i, E^b_j\in  \Gamma$, with $b\geq a$.  For the curve $E^a_i$ in $X=X_1$,  we can consider its images under projection to the blowdown surfaces 
$X_2, X_3, \dots, X_a$. In $X_a$, the image is the curve $D^a_i$, it 
has self-intersection $(-1)$ and gets blown down at the subsequent step. Similarly, $E^b_j$ projects non-trivially to $X_1, X_2, \dots, X_b$ 
and becomes a point after the blowdown to $X_{b+1}$.  If $D^a_i$ intersects the image of 
$E^b_j$ in $X_a$, we say that $E^a_i$ is {\em proximate} to $E^b_j$ and use notation $E^a_i \rightsquigarrow E^b_j$.
Equivalently, 
$E^a_i \rightsquigarrow E^b_j$ if, once  $E^a_i$ becomes a point  in $X_{a+1}$ after the blowdown $X_a \to X_{a+1}$, this point lies in 
the image of $E^b_j$ in $X_{a+1}$. (Note that $E^a_i \rightsquigarrow E^b_j$ implies 
in fact that $b>a$:  if 
$b=a$, then the projections of $E^i_a$ and  $E^b_j$ to $X_{a}=X_b$ are the smooth complex curves $D^a_i$ and $D^b_j$ with self-intersection $(-1)$, 
which must be disjoint in a negative-definite manifold.)

\begin{lemma} The homology classes $D^r_j \in H_2(X)$ can be recursively expressed via the basis classes $E^m_i$ as follows:
\begin{equation}\label{eq:prox}
D^m_j = E^m_j + \sum_{(i,n): E^n_i \rightsquigarrow E^m_j}  D^n_i.
\end{equation}
\end{lemma}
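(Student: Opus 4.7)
The plan is to prove the formula by downward induction on an intermediate stage $s$, tracking the total transform of $D^m_j$ as it is pulled back step by step through the blowdown tower $X=X_1\to X_2\to\cdots\to X_m$. The single input used at each step will be the standard blowup formula: for the blowdown $\pi_s\colon X_s\to X_{s+1}$ at points $p^s_1,\dots,p^s_{k(s)}$ with exceptional divisors $D^s_1,\dots,D^s_{k(s)}$, and any smooth curve $C\subset X_{s+1}$,
\[ \pi_s^*[C] = [\ti C]_{X_s} + \sum_{l:\, p^s_l\in C}[D^s_l]_{X_s}, \]
where $\ti C$ is the strict transform.

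To set up the induction I introduce $\Phi_{s,n}=\pi_s^*\circ\pi_{s+1}^*\circ\cdots\circ\pi_{n-1}^*\colon H_2(X_n)\to H_2(X_s)$ for $s\leq n$ (with $\Phi_{s,s}=\mathrm{id}$), so in the notation of the excerpt $\Phi_{1,n}([D^n_i]_{X_n})$ is the class denoted $D^n_i$ in $H_2(X)$. Let $\ti D^m_j|_{X_s}$ denote the strict transform of $D^m_j$ in $X_s$; in particular $\ti D^m_j|_{X_m}=D^m_j$, and $\ti D^m_j|_{X_1}=E^m_j$ by the very definition of $E^m_j$. I will prove the strengthened identity that for every $1\leq s\leq m$,
\[ \Phi_{s,m}([D^m_j]_{X_m})=[\ti D^m_j|_{X_s}]+\sum_{\substack{(n,i):\ s\leq n<m\\ E^n_i\rightsquigarrow E^m_j}}\Phi_{s,n}([D^n_i]_{X_n}), \]
whose specialization at $s=1$ is precisely the formula~\eqref{eq:prox}.

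The strengthened identity is proved by downward induction on $s$. The base case $s=m$ is trivial: the sum is empty and both sides equal $[D^m_j]_{X_m}$. For the step $s+1\to s$, I apply $\pi_s^*$ to the identity at level $s+1$. The left side becomes $\Phi_{s,m}([D^m_j]_{X_m})$, and because $\pi_s^*\circ\Phi_{s+1,n}=\Phi_{s,n}$ the inherited sum transports directly. The strict transform term contributes, by the blowup formula, $[\ti D^m_j|_{X_s}]$ together with $\sum_{l:\, p^s_l\in\ti D^m_j|_{X_{s+1}}}[D^s_l]_{X_s}$, which is $\sum_l \Phi_{s,s}([D^s_l]_{X_s})$. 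Absorbing this new $s$-level piece into the inherited sum over $s+1\leq n<m$ produces a single sum over $s\leq n<m$, completing the inductive step.

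The one step with genuine content is translating the blowup-formula condition $p^s_l\in\ti D^m_j|_{X_{s+1}}$ into the proximity relation $E^s_l\rightsquigarrow E^m_j$ from the excerpt. This rests on identifying $\ti D^m_j|_{X_{s+1}}$ with the image of $E^m_j\subset X$ in $X_{s+1}$, which is valid because $E^m_j$ is untouched by each blowdown $X_1\to X_2\to\cdots\to X_{s+1}$: each of these only contracts the $D^r_k$ with $r\leq s$, and never $E^m_j$ itself (recall $E^m_j$ is first contracted only at the stage $X_{m-1}\to X_m$). Once this geometric identification is noted, the rest of the argument is routine bookkeeping built around the single-blowup formula, and no further ideas are needed.
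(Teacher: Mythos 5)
Your proof is correct and follows essentially the same route as the paper: both iterate the single-blowup formula (strict transform $=$ total transform minus exceptional divisors over blown-up points) down the tower from $X_m$ to $X_1$, and both hinge on identifying $\ti D^m_j|_{X_{s+1}}$ with the image of $E^m_j$ in $X_{s+1}$ to translate the condition $p^s_l\in \ti D^m_j|_{X_{s+1}}$ into the proximity relation $E^s_l\rightsquigarrow E^m_j$. (Minor indexing slip: the projection of $E^m_j$ is contracted at the stage $X_m\to X_{m+1}$, not $X_{m-1}\to X_m$, but this does not affect the argument.)
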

\begin{proof} The lemma follows from the familiar relation between the homology classes of the strict transform and the total transform. 
Indeed, blowing up $X_{m}$ to obtain 
$X_{m-1}$, we have
$$
[\text{strict transform of } D^m_j \text{ in  } X_{m-1} ]= D^m_j - \sum_{i: E^{m-1}_i \rightsquigarrow E^m_j}  D^{m-1}_i,
$$
because by definition, the proximate classes $E^{m-1}_i$ correspond precisely to exceptional divisors $D^{m-1}_i \subset X_{m-1}$ that project to points in  $D^m_j$
under blowdown $X_{m-1} \to X_{m}$. Note that $D^m_j$ is a smooth complex curve in $X_{m}$, so all these points are smooth, and all the divisors enter with multiplicity 
1 in the above formula for the strict transform.  

Further, observe that the strict transform of  $D^m_j$  in   $X_{m-1}$ (taken with respect to the map  $X_{m-1} \to X_{m}$) 
is the same as the projection of $E^m_j$ under the blowdown $X=X_1 \to \dots \to X_{m-1}$. It follows 
that the vertices $E^{m-2}_i$ proximate to $E^m_j$ correspond exactly to those divisors $D^{m-2}_i$ among  $D^{m-2}_1, \dots, D^{m-2}_{k-2}$ in $X_{m-2}$ that project,
under the blowdown  $X_{m-2} \to X_{m-1}$, to points of this strict transform of $D^m_j$. All these points are smooth. Comparing the strict transform in $X_{m-2}$ to 
the total 
transform, we get 
$$
[\text{strict transform of } D^m_j \text{ in  } X_{m-2} ]= D^m_j - \sum_{i: E^{m-1}_i \rightsquigarrow E^m_j}  D^{m-1}_i -
\sum_{i: E^{m-2}_i \rightsquigarrow E^m_j}  D^{m-2}_i. 
$$
We continue inductively, obtaining similar expressions for strict transforms  of $D^m_j$ in $X_{m-3}$, etc, until we arrive at the formula for $E^m_j$, which is a strict
transform of $D^m_j$ in $X=X_1$:
$$
E^m_j = D^m_j - \sum_{(i,n): E^n_i \rightsquigarrow E^m_j}  D^n_i.
$$
\end{proof}

Explicit calculations (such as Example~\ref{ex:div} below) can be done via handleslides on the plumbing graph, Kirby calculus-style. 
The graph $\Gamma$ gives a Kirby diagram for $X_1$ (each vertex corresponds to an unknotted component of the link). At each step of the blowdown, we look for 
$(-1)$-framed unknots in the diagram, handleslide them away, and blow down; the classes in $H_2$ corresponding to these $(-1)$ unknots in the diagram for 
$X_r$ are $D^r_1, \dots, D^r_{k(r)}$.  
Proximity relation $E^a_i \rightsquigarrow E^b_j$ means that in the diagram for $X_a$,  the $(-1)$-framed unknot  representing $D^a_i$  is linked with the component 
that corresponds to $E^b_j$ (this component survives in $X_a$ because $b>a$). Accordingly, when we handleslide $D^a_i$ away from a link component, the homology class 
of the handle corresponding to the latter changes by adding $D^a_i$. It is not hard to see that in this context,  Formula~(\ref{eq:prox}) is a consequence of handle 
addition operations and their effect on homology.  Note that if  we handleslide away the components corresponding to $D^m_j$ but 
do not blow them down, the resulting diagram represents $X$ at every step, and we explicitly see smooth representatives of the homology classes from 
$\mathcal D$ in $X$, given by all the $(-1)$ framed unknots that we slide away.

\begin{center}
\begin{figure}[htb]
\includegraphics[width=0.3\textwidth]{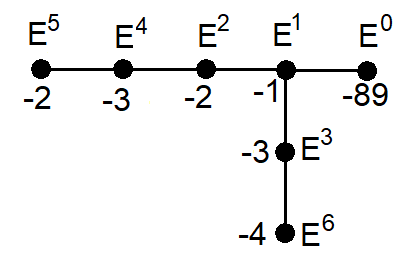}
\caption{Plumbing graph of Example \ref{ex:div}}
\label{figex}
\end{figure}
\end{center}

\begin{example}\label{ex:div}
Consider the negative definite plumbing graph of Figure \ref{figex}.
(This is the plumbing representation of the $(-1)$-surgery along the $(8,11)$ algebraic torus knot: the part of the graph obtained by deleting $E^0$ is the plumbing of $S^3$ arising as the embedded good resolution of the plane curve singularity $x^8 = y ^{11}$.)

In this example, at every blowdown step, we encounter one $(-1)$-divisor only, therefore, we omit the lower indices.
The divisors $E^1, E^2, \dots, E^6$ can be blown down in this order consecutively.
Just before $E^1$ is blown down, it intersects  $E^3$ and $E^2$, so $E^1$ is proximate to $E^3$ and also to $E^2$. 
Next, $E^2$ is blown down, and just before this happens in $X_1$, it intersects the strict transforms of $E^4$ and $E^3$, 
so $E^2$ is proximate to $E^4$ and $E^3$. Similarly, $E^3$ is proximate to $E^4$ and $E^6$, $E^4$ is proximate to $E^5$ and $E^6$, 
and $E^5$ is proximate to $E^6$ only.

This means that we start with $D^1 = E^1$. Then, since only $E^1$ is proximate to $E^2$, we have $D^2 = D^1 + E^2 = E^1 + E^2$.
The divisors proximate to $E^3$ are $E^2$ and $E^1$, thus $D^3 = D^2 + D^1 + E^3 = 2 E^1 + E^2 + E^3$.
Divisors $E^3$ and $E^2$ are proximate to $E^4$, so $D^4 = D^3 + D^2 + E^4 = 3 E^1 + 2 E^2 + E^3 + E^4$.
Only $E^4$ is proximate to $E^5$, so $D^5 = D^4 + E^5 = 3 E^1 + 2 E^2 + E^3 + E^4 + E^5$.
Divisors $E^5, E^4$ and $E^3$ are all proximate to $E^6$:
$D^6 = D^5 + D^4 + D^3 + E^6 = 8 E^1 + 5 E^2 + 3 E^3 + 2 E^4 + E^5 + E^6$. We will return to this example in the proof of Lemma~\ref{lem:component}.
\end{example}

We will now use the blowup formula in Heegaard Floer homology to identify the $Spin^c$ structures on $X$ with  $F^+_{W(\Gamma), \s}(c^+(\xi_0)) \neq 0$. By contrast with 
Lemma~\ref{lem:canhom-min},  when $\Gamma$ has vertices of weight $-1$ it is no longer true that $F^+_{W(\Gamma), \s}(c^+(\xi_0))=0$ for all 
$Spin^c$-structures $\s$ other than the canonical one.  Recall that $K_0 = c_1(TX, J)$, and let $\s_{min, can}$ denote the canonical $Spin^c$ structire on $X_{min}$.

\begin{lemma} \label{lem:blowups}  Let $\s$ be a $Spin^c$-structure on $X$.

(1)  $F^{+}_{W, \s} (c^+(\xi_0))$ is the generator of $\T_0^+$ in degree $0$ if $\langle c_1(\s), D \rangle  = \pm 1$ for all 
$D \in \mathcal{D}$, and $\s|X_{min}= \s_{min, can}$.  Otherwise, $F^{+}_{W, \s} (c^+(\xi_0))=0$.

(2) Equivalently, $F^{+}_{W, \s} (c^+(\xi_0)) = 1 \in \T_0^+$
 if
$c_1(\s)= K_0 + 2 \PD(D)$ for some  $D \in \mathcal{S}$; otherwise,   $F^{+}_{W, \s} (c^+(\xi_0))=0$.
\end{lemma}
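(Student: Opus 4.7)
The plan is to reduce the computation of $F^{+}_{W, \s}(c^+(\xi_0))$ to the Stein filling case via the Ozsv\'ath--Szab\'o blowup formula \cite{OSfour} applied iteratively, combined with Plamenevskaya's theorem \cite[Theorem 4]{Pla} that was already used in the proof of Lemma~\ref{lem:canhom-min}. The crucial input is the remark preceding the lemma: each class $D^r_j \in \mathcal{D}$ is represented by a smoothly embedded symplectic sphere of self-intersection $-1$ in $X$, and these representatives can be chosen pairwise disjoint for all $(r,j)$. Such a configuration of $k = |\mathcal{D}|$ disjoint $(-1)$-spheres yields a diffeomorphism $X \cong X_{min} \# k \, \overline{\CP^2}$ in which the $\overline{\CP^2}$ summands are generated precisely by the classes in $\mathcal{D}$. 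Correspondingly, as a cobordism from $S^3$ to $Y$, we obtain $W = W_{min} \# k \, \overline{\CP^2}$ (interior connected sums), where $W_{min}$ is obtained from $X_{min}$ by removing a small ball.

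To establish part (1), I iterate the blowup formula once for each $D^r_j \in \mathcal{D}$. Each application kills the cobordism map unless $\langle c_1(\s), D^r_j \rangle = \pm 1$, and when all such conditions are satisfied reduces $F^{+}_{W, \s}$ to $F^{+}_{W_{min}, \s|_{W_{min}}}$. Since $X_{min}$ is a Stein filling of $(Y, \xi_0)$ by \cite{BO}, \cite[Theorem 4]{Pla} applies directly (this result requires only a Stein filling, not a good resolution graph for the minimal model): the resulting cobordism map is the degree-$0$ generator of $\T_0^+$ if $\s|_{X_{min}} = \s_{min, can}$, and is zero otherwise.

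For part (2), I verify the equivalence with (1) by direct computation in $H_2(X)$ using the orthogonal decomposition $H_2(X) = H_2(X_{min}) \oplus \bigoplus_{r,j} \ZZ \langle D^r_j \rangle$ coming from the connected sum. For any $D = \sum \varepsilon^n_i D^n_i \in \mathcal{S}$ with $\varepsilon^n_i \in \{0,1\}$, adjunction gives $\langle K_0, D^m_j \rangle = D^m_j \cdot D^m_j + 2 = 1$, so the orthogonality of the $D^n_i$ yields $\langle K_0 + 2\PD(D), D^m_j \rangle = 1 - 2\varepsilon^m_j \in \{\pm 1\}$; moreover, the classes $\PD(D^n_i)$ are supported on the $\overline{\CP^2}$ summands and vanish on restriction to $X_{min}$, so $c_1(\s)|_{X_{min}} = K_0|_{X_{min}} = c_1(\s_{min, can})$, and since $X_{min}$ has $H^1 = 0$ the $\Spinc$ structure is determined by its Chern class. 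The converse is analogous: writing $c_1(\s) = K_0 + 2\PD(x)$ and decomposing $x$, the pairing constraints force the $D^r_j$-coefficients to lie in $\{0,1\}$ while the restriction condition (using injectivity of $\PD$ on $H_2(X_{min})$, which follows from $H^1(Y) = 0$) forces the $H_2(X_{min})$-component of $x$ to vanish.

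The main subtle point is justifying the connected sum decomposition: the classes $D^r_j$ are defined as pullbacks of total transforms, which are in general reducible singular divisors in $X$ and so cannot directly serve as exceptional spheres for a smooth decomposition. The remark preceding the lemma resolves this by producing pairwise disjoint smooth symplectic $(-1)$-sphere representatives, which is precisely the input required to apply the Heegaard Floer blowup formula; once this is in place, the rest of the argument is a routine iteration plus a linear-algebra verification of the equivalence between the two descriptions.
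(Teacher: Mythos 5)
Your proof is correct and follows essentially the same route as the paper: decompose $W$ as a composition of blowup cobordisms with $W_{min}$, iterate the Ozsv\'ath--Szab\'o blowup formula together with the composition law, use $c^+(\xi_0)\in\Ker U$ to kill the map unless the $\Spinc$ structure pairs to $\pm 1$ with every class in $\mathcal{D}$, and then apply Plamenevskaya's theorem on the Stein filling $X_{min}$. Your algebraic verification of the equivalence between (1) and (2) also matches the paper's.

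The one presentational difference worth noting is how you package the reduction. You invoke the pairwise disjoint symplectic $(-1)$-sphere representatives of the classes in $\mathcal{D}$ (from the preceding Remark) to produce a single connected-sum decomposition $X\cong X_{min}\connsum k\,\overline{\CP^2}$ with summands generated by $\mathcal{D}$, and then blow down all factors at once. The paper instead inducts one blowdown at a time along the complex sequence $X\to X'\to\dots\to X_{min}$: at each step the exceptional class is represented by a smooth \emph{complex} $(-1)$-curve in the current surface, and $\mathcal{D}=\mathcal{D}'\cup\{D_0\}$. This stepwise version sidesteps any appeal to the symplectic sphere representatives for this particular lemma; in the paper those representatives are not needed here, but rather enter later in the proof of Lemma~\ref{lem:component}. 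Both routes are valid and rely on the same ingredients. Your explicit observation that Plamenevskaya's theorem only requires a Stein filling of $(Y,\xi_0)$ (and not a good resolution graph for $X_{min}$) is correct and is precisely the reason the base case works even when $X_{min}$ fails to be a normal crossing resolution.
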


\begin{cor}\label{cor:phipsi} (1) The element $c=T^+(c^+(\xi_0))\in \HH_0^+(\Gamma)$  is given by the function $\phi_0 \in \HH_0^+(\Gamma)$ such that
$$
\phi_0(\s)=  \begin{cases}  1 \in \T_0^+  \textrm{\ if\  } \s = K_0 + 2 \PD(D), D\in \mathcal{S}, \\
0 \textrm{\ otherwise}.
\end{cases}
$$

(2) Under the isomorphism $i_{K_0}^*$ of Lemma~\ref{lem:charvsl}, the function
$\phi_0$ corresponds to the  function $\varphi_0 \in \HH\LL_0^+(\Gamma, K_0)$
such that for $x\in L$, 
$$
\varphi_0(x) =  \begin{cases}  1 \in \T_0^+  \textrm{\ if\  } x \in \mathcal{S}, \\
0 \textrm{\ otherwise}.
\end{cases}
$$
\end{cor}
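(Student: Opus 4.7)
The plan is to read off the corollary directly from Lemma~\ref{lem:blowups} combined with the isomorphism of Lemma~\ref{lem:charvsl}; no substantial new input is needed beyond unpacking the definitions of $T^+$ and $\iota_{K_0}^*$.

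For part~(1), I would start from the definition of $T^+$ recalled just before Lemma~\ref{OS-map}: for any $x \in HF^+(-Y)$, $T^+(x)$ is the function $\Char(\Gamma) \to \T_0^+$ that sends $K = c_1(\s)$ to $F^+_{W(\Gamma),\s}(x)$. Specializing to $x = c^+(\xi_0)$ and applying part~(2) of Lemma~\ref{lem:blowups} gives precisely
\[
c(K) \;=\; \begin{cases} 1 \in \T_0^+ & \text{if } K = K_0 + 2\PD(D) \text{ for some } D \in \mathcal{S},\\ 0 & \text{otherwise}, \end{cases}
\]
which is exactly the function $\phi_0$ in the statement. Since $\mathcal{S} \subset L$, every such $K$ lies in the single orbit $[K_0]$, so $c \in \HH_0^+(\Gamma,[K_0])$, consistent with the fact that $c^+(\xi_0)$ sits in the canonical $Spin^c$-summand. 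The required compatibility relations~\eqref{eq:compOzsSz} come for free: by Lemma~\ref{OS-map}, $T^+$ lands in $\HH_0^+(\Gamma)$, so $\phi_0$ automatically satisfies them.

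For part~(2), the isomorphism $\iota_{K_0}^*$ of Lemma~\ref{lem:charvsl} is given by $\varphi = \phi \circ \iota_{K_0}$ with $\iota_{K_0}(x) = K_0 + 2\PD(x)$. Therefore
\[
\varphi_0(x) \;=\; \phi_0\bigl(K_0 + 2\PD(x)\bigr),
\]
and by part~(1) this evaluates to $1$ exactly when $x \in \mathcal{S}$ and to $0$ otherwise, yielding the stated formula for $\varphi_0$.

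I do not expect any real obstacle in this particular step, as the corollary is essentially a translation of Lemma~\ref{lem:blowups} through the $\FF[U]$-equivariant dictionary $\HH_0^+(\Gamma,[K_0]) \simeq \HH\LL_0^+(\Gamma, K_0)$ already established in Section~\ref{s:latticehom}. The genuinely substantive content sits in Lemma~\ref{lem:blowups}, whose proof requires the blowup formula for Heegaard Floer cobordism maps applied iteratively along the sequence~\eqref{blowupdown}, together with the identification of the classes in $\mathcal{D}$ via the proximity formula~\eqref{eq:prox}; the corollary merely repackages its output.
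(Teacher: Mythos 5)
Your proposal is correct and takes the same route the paper does: the paper's own proof is a one-line reference to Lemma~\ref{lem:blowups} and the argument of Lemma~\ref{lem:canhom-min}, and you have simply unpacked that—plugging Lemma~\ref{lem:blowups}(2) into the definition of $T^+$ for part (1) and applying $\iota_{K_0}^*$ for part (2). No substantive discrepancy.
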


\begin{proof}[Proof of Corollary~\ref{cor:phipsi}] This follows from Lemma~\ref{lem:blowups}, by an argument completely 
 analogous to the proof of Lemma~\ref{lem:canhom-min} \end{proof}.

\begin{proof}[Proof of Lemma~\ref{lem:blowups}]  

(1) In the minimal case, the set $\mathcal{S}$ is empty, and the statement was already given in Lemma~\ref{lem:canhom-min}.

In the non-minimal case, we blow down $X$ to $X_{min}$, perhaps in several steps, and argue by induction on the 
number of exceptional divisors in this sequence of blowdowns.

Suppose the statement holds for the complex surface $X'$ which is a blowup of $X_{min}$, and $X$ is obtained from 
$X'$ by an additional blowup  with exceptional divisor $D_0$.
Let $\mathcal D'$ and $\mathcal S'$ be the sets of homology classes defined for $X'$ as in formulas~(\ref{Dclasses}) and~(\ref{Sclasses}). 
Note that $H_2(X)=H_2(X')\oplus \ZZ$;  identifying homology 
classes in $X'$ with their images under inclusion in $X$, we have $\mathcal D = \mathcal D' \cup \{D_0 \}$.
Let $W$, $W'$ denote the corresponding cobordisms from  $-Y^3$ to $S^3$, and let $B= [0, 1] \times Y \# \overline{\CP^2}$ be the blowup cobordism.
As a smooth manifold, $W$ can be thought of as composition of cobordism $B$ from $-Y$ to $-Y$ followed by cobordism $W'$ from $-Y$ to $S^3$.
Since $W= W' \#\overline{\CP^2}$,   a $Spin^c$-structure $\s$ on $W$ is completely determined  
by its restrictions $\s|_{W'}$  and $\s|_B$, so by the composition law \cite[Theorem 3.4]{OSfour}, 
we have 
$$
F^+_{W', \s|_{W'}} \circ  F^{+}_{B, \s|B} = F^{+}_{W, \s}.
$$
We are interested in  the canonical contact structure $\xi_0$ on 
$Y = \partial X' = \partial X$, so we focus on  $Spin^c$-structures with $\s|_Y =\s_{can}|_{Y}$, since $c^+(\xi_0) \in HF^+(Y, \s_{can}|_{Y})$.

The map $F^{+}_{B, \s|B}$ is given by the Ozsv\'ath--Szab\'o
blowup formula \cite[Theorem 3.7]{OSfour}. Let $D_0$ be the exceptional divisor of the blowup, and 
$\langle c_1(\s), D_0 \rangle =  \langle c_1(\s|B), D_0 \rangle =  \pm (2l +1)$,  $l \geq 0$.
Then 
$$
F^{+}_{B, \s|B}: HF^{+}\left ({-Y, \s|_Y} \right) \to  HF^{+}\left( {-Y, \s|_Y} \right) 
$$ is the multiplication by $U^{l(l+1)}$. 
By~\cite{OScont}, the contact invariant lies in $\Ker U$, 
thus we get   we get $F^{+}_{B, \s|B} (c^+(\xi_0))=0$  if $l \neq 0$. For $l=0$,  
$F^{+}_{B, \s|B} (c^+(\xi_0))= c^+(\xi_0)$
since $F^{+}_{B, \s|B}$ is the identity map.

By the induction hypothesis, $F^{+}_{W', \s|_{W'}} (c^+(\xi_0))$ is the generator  $1 \in \T_0^+$  if   $\s|_{W_{min}} = \s_{min, can}$ and 
$  \langle c_1(\s), D' \rangle = \pm 1$ for all $D' \in \mathcal D'$; moreover,  $F^{+}_{W', \s|_{W'}} (c^+(\xi_0))$ is zero for all other $Spin^c$ structures.
Then from the composition formula,   
$F^{+}_{W, \s|_{W}} (c^+(\xi_0)) =1 \in \T_0^+$  if 
\begin{equation*}
 \s|_{W_{min}}= \s_{min, can}, \quad \langle c_1(\s), D' \rangle = \pm 1 \text{ for all } D' \in \mathcal D', \text{ and }\langle c_1(\s), D_0 \rangle = \pm 1. 
 \end{equation*}
For all other $Spin^c$ structures on $X$, $F^{+}_{W, \s} (c^+(\xi_0))=0$. 
Since $\mathcal D = \mathcal{D'} \cup \{D_0 \}$,  part (1) of the lemma holds for $Spin^c$ structures on  $X$. 

(2) Observe that  
for a $Spin^c$-structure $s$ with  $\s|_{W_{min}}  = \s_{can}$ and $\langle c_1(\s), D \rangle = \pm 1$ for all $D \in \mathcal D$, we have 
$$
c_1(\s)= c_1(\s_{can, W_{min}}) + \sum_{m,j} \pm \PD(D^m_j).    
$$
The canonical  $Spin^c$ structure 
$\s_{can}$ on $W$ is determined by  the equalities  $\s|_{W_{min}}  = \s_{can, W_{min}}$ and $\langle c_1(\s), D^m_j \rangle = + 1$ for all $D^m_j \in \mathcal{D}$, so  
$$
K_0=c_1(\s_{can})= c_1(\s_{can, W_{min}}) - \sum_{m,j} \PD(D^m_j).   
$$
The  statement now follows from the first part of the lemma.
\end{proof}

We now show that part (3) of Lemma~\ref{lem:canhom-min}  holds in the general case even though parts (1) and (2) don't. 
Corollary~\ref{cor:phipsi} identifies the zero set of
the function $\varphi_0$ as the set $\mathcal S \subset H_2(X;\ZZ)$ given by sums of distinct homology classes  
from $\mathcal{D}$. To understand the function $\psi_0$ corresponding to $\varphi_0$ in  the graded root description of lattice homology, we examine 
connected components of the level set $\overline{L}_{K_0, \leq 0}$ (compare with Remark~\ref{rem:C=0}).

\begin{lemma}\label{lem:component} (1) Let $C_0$ be the connected component of $0$ in the level set $\overline{L}_{K_0, \leq 0}$.
Then $\mathcal{S}$ is contained in $C_0$.  
(In fact, $\mathcal S =C_0$.) 
 
 (2)  Under the isomorphism $\theta^*$ of Lemma~\ref{lem:latticevsgradedroot}, 
the function $\varphi_0$ corresponds to the function $\psi_0: \V(R_{K_0}) \to
\T_0^+$
such that $\psi_0(w_0)$ is the generator of $\T_0^+$ in degree $0$, and
$\psi_0=0$
for all other vertices of $R_{\ts_{can}}$. The vertex $w_0$ corresponds to the connected component of $0$ in the homology lattice.
\end{lemma}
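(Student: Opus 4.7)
The plan is to verify that $\chi_{K_0}$ vanishes on $\mathcal{S}$, then prove $\mathcal{S}\subseteq C_0$ by constructing explicit paths, deduce the reverse inclusion $C_0\subseteq\mathcal{S}$ from the lattice-cohomology compatibility of $\varphi_0$ established in Corollary~\ref{cor:phipsi}, and finally read off the description of $\psi_0$.

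For the first step, the remark preceding~(\ref{Dclasses}) shows that every $D^m_j\in\mathcal{D}$ is represented in $X$ by a smooth symplectic sphere of self-intersection $-1$, with distinct classes in $\mathcal{D}$ represented by pairwise disjoint such spheres. The symplectic adjunction formula gives $\langle K_0,D^m_j\rangle=(D^m_j)^2+2=1$, so $\chi_{K_0}(D^m_j)=0$; disjointness gives $D^m_j\cdot D^{m'}_{j'}=0$ for distinct pairs in $\mathcal{D}$. The bilinear identity $\chi_{K_0}(x+y)=\chi_{K_0}(x)+\chi_{K_0}(y)-x\cdot y$ then yields $\chi_{K_0}(D)=0$ for every $D\in\mathcal{S}$, placing $\mathcal{S}$ inside $\overline{L}_{K_0,\leq 0}$.

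The heart of the argument is showing $\mathcal{S}\subseteq C_0$ by connecting each $D_T\in\mathcal{S}$ to $0$ through a sequence of single $\Gamma$-basis moves that stays inside $\mathcal{S}$ (hence at level $0$, hence inside $\overline{L}_{K_0,\leq 0}$). The move at each stage is as follows: pick $(m^*,j^*)\in T$ of minimum blowup index. Any proximate $(n,i)\rightsquigarrow(m^*,j^*)$ satisfies $n<m^*$, and all other elements of $T$ satisfy $m\geq m^*$, so no proximate of $(m^*,j^*)$ lies in $T\setminus\{(m^*,j^*)\}$. The proximity formula~(\ref{eq:prox}) then gives
\[
D_T-E^{m^*}_{j^*}=D_{T\setminus\{(m^*,j^*)\}}+\sum_{(n,i)\,:\,E^n_i\rightsquigarrow E^{m^*}_{j^*}}D^n_i=D_{T'}
\]
for the disjoint union $T'=(T\setminus\{(m^*,j^*)\})\sqcup\{(n,i):E^n_i\rightsquigarrow E^{m^*}_{j^*}\}\subseteq\mathcal{D}$, so $D_{T'}\in\mathcal{S}$. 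Thus $D_T$ and $D_{T'}$ are joined by a single $\Gamma$-basis edge inside $\overline{L}_{K_0,\leq 0}$. The total basis-expansion coefficient $\sigma(T):=\sum_{n,i}c_{ni}$ (where $D_T=\sum_{n,i}c_{ni}E^n_i$) strictly decreases by $1$ at each step, so the process terminates at $T=\emptyset$, yielding the required path. The main obstacle is to simultaneously ensure that the move lands in $\mathcal{S}$ and monotonically reduces a well-founded measure; the minimum-$m^*$ choice handles both.

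With $\mathcal{S}\subseteq C_0$ in place, the reverse inclusion is essentially formal. Corollary~\ref{cor:phipsi}(2) gives $\varphi_0\in\HH\LL_0^+(\Gamma,K_0)$, so Lemma~\ref{lem:latticevsgradedroot} writes $\varphi_0=\theta^*\psi$ for some $\psi\in\HH(R_{K_0},\chi_{K_0})$, forcing $\varphi_0$ to be constant on each fiber $\theta^{-1}(w)$. By Lemma~\ref{can-vertex}(1), $\chi_{K_0}\equiv 0$ on $C_0$, so $\theta^{-1}(w_0)=C_0$. Combined with $\varphi_0(0)=1$, this gives $\varphi_0\equiv 1$ on $C_0$; since $\varphi_0=1$ only on $\mathcal{S}$, we conclude $C_0\subseteq\mathcal{S}$, completing part~(1). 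For part~(2), define $\psi_0:\V(R_{K_0})\to\T_0^+$ by $\psi_0(w_0)=1$ and $\psi_0(w)=0$ otherwise; Lemma~\ref{psi0} confirms $\psi_0\in\HH(R_{K_0},\chi_{K_0})$. Using $\mathcal{S}=C_0$, I would verify $\theta^*\psi_0=\varphi_0$ pointwise: for $x\in\mathcal{S}$, $\theta(x)=w_0$ and both sides equal $1$; for $x\notin\mathcal{S}$, $\theta(x)\neq w_0$ and both sides vanish. Uniqueness in Lemma~\ref{lem:latticevsgradedroot} then identifies $\psi_0$ as the image of $\varphi_0$ under $(\theta^*)^{-1}$.
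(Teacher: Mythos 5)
Your proposal is correct and follows essentially the same strategy as the paper: establish $\chi_{K_0}\equiv 0$ on $\mathcal{S}$ via the disjoint symplectic $(-1)$-spheres and the bilinear identity; show $\mathcal{S}\subseteq C_0$ by a recursion based on the proximity formula~\eqref{eq:prox} that removes the summand $D^{m^*}_{j^*}$ of smallest blowup index and replaces it by its proximate collection; deduce $C_0\subseteq\mathcal{S}$ from the fact that $\varphi_0$ factors through $\theta$; and read off $\psi_0$. The one genuine (if small) variation is your termination measure: the paper argues termination of the recursion via a lexicographic ordering on the $R$-tuple $t(F)=(s_R,\dots,s_1)$ recording the number of depth-$r$ summands, whereas you use the single non-negative integer $\sigma(T)=\sum c_{ni}$ (the total $E$-basis coefficient of $D_T$) and observe it drops by exactly $1$ at each step since $D_{T'}=D_T-E^{m^*}_{j^*}$. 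This is a cleaner bookkeeping device, though it relies on the (easily checked) observation that all $c_{ni}\geq 0$ and that $D_T=0$ only for $T=\emptyset$. You are also slightly more careful than the paper in the reverse inclusion, making explicit that $\theta^{-1}(w_0)=C_0$ (using Lemma~\ref{can-vertex}(1)) so that $\varphi_0=\theta^*\psi$ forces $\varphi_0\equiv 1$ on $C_0$; the paper's phrasing there (``zero set'') is a bit imprecise and your fiber-based formulation is the right way to say it. Both approaches prove the same statements in the same order; the differences are matters of exposition and of which well-founded measure one chooses.
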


\begin{proof} (1) We saw that  the homology classes  $D^m_j$ in the set $\mathcal D$
can be represented by pairwise disjoint symplectic spheres of self-intersection $(-1)$ in $X = X_0$. 
 It follows that these classes all lie in the zero level set of $\chi_{K_0}$: by  adjunction formula,
$\langle K_0, D^m_j \rangle = \langle c_1(TX, J), D^m_j \rangle =   D^m_j \cdot D^m_j  + 2 = 1$, therefore,
\[ \chi_{K_0}(D^m_j) = 0. \]  Clearly,  $  D^m_j \cdot D^n_i =0$ for any two distinct classes  since they have disjoint representatives.  
The property of the weight function 
$\chi_{K_0}(x+y) = \chi_{K_0}(x) + \chi_{K_0}(y) - x \cdot y $ then implies that $\chi_{K_0} = 0$ for any of the sums forming $\mathcal{S}$, thus 
$\mathcal{S}$ lies in the zero level set $\overline{L}_{K_0, \leq 0}$. It will be convenient to refer to   
$D^m_1, D^m_2, \dots, D^m_{k(m)}$ as elements of depth $m$, $1\leq m\leq R$.

We want to show that $\mathcal{S}$ lies in the connected component $C_0$. 
Recall from section~\ref{ss:lattice} that 1-cells in $\overline{L}_{K_0, \leq 0}$ correspond to
basis vectors in the lattice $L=H_2(X, \ZZ)$: we can walk from $x\in L$ to 
$x+v \in L$ along the edge corresponding to $v$ if $v$ is a basic vector. For each element  of depth 1, the homology class 
$D^1_i=E^1_i$ is in the basis, thus $D^1_i=0+E^1_i$  lies in $C_0$ because it is connected to 0 by the egde corresponding to $E^1_i$. 
A typical lattice point $F\in \mathcal{S}$ is a sum of some elements $D^m_j$. We will show that $F \in C_0$ recursively, by reducing to sums of elements 
of lower depth. The key idea is Formula \eqref{eq:prox}, which recursively expresses 
classes $D^m_j$ in terms of the basis elements $E^1_1, \dots, E^R_{k(R)}$. Notice that in the above formula, 
each instance of $n$ on the right hand side is strictly less than $m$, thus $D^m_j = E^m_j + \text{sum of elements of lower depth}$.


First, we illustrate the recursive procedure for the lattice from  Example~\ref{ex:div}. 
In fact, we construct the paths from 0 starting with elements of lower depth and building up to higher depth;
the recursion will work backwards, reducing the depth. 
To begin, observe that  $D^1 \in C_0$ as above. Next, $D^2=D^1+E^2$ is one step away from $D^1$ along the edge 
$E^2$, so $D^2 \in C_0$. Further, $D^1+D^2= D^2+E^1$ is connected to $D^2$ by the edge $E^1$, so $D^1+D^2 \in C_0$. 
Next, we see that $D^3=D^1+D^2+E^3$ is connected  
to the vertex $D^1+D^2 \in C_0$ by the edge $E^3$, so $D^3 \in C_0$. Then, we establish that $D^1+D^3= D^3+E^1 \in C_0$, and then  $D^2+D^3=D^3+D^1+E^1 \in C_0$, 
and then  $D^1+D^2+D^3= D^2+D^3+E^1 \in C_0$.
Now we see that  $D^4= D^2+D^3 + E^4 \in C_0$ because we already know that $D^2+D^3 \in C_0$. We can write out the full path 
from 0 to $D^4$:
$$
D^4=\underbrace{\underbrace{\underbrace{E^1}_{D^1} + E^2}_{D^2} + \underbrace{E^1}_{D^1} + E^3}_{D^3}
 + \underbrace{\underbrace{E^1}_{D^1} + E^2}_{D^2} + E^4. 
$$
In this form, 
$D^4 = \sum_{j = 1}^k E^{i(j)}$ (with possibly repeating indices $i(j)$) is the sum of basis elements, showing steps along the corresponding edges. 
The initial partial sums $P^m = \sum_{j = 1}^m E^{i(j)}$ we obtain after each step correspond to the vertices of the lattice $L$ that our path goes through, 
starting from 0. The underbraces show that each such vertex 
is a sum of some \emph{distinct} elements $D^j$. 
All of these sums lie in $\mathcal{S}$ which is contained the level set $\overline{L}_{K_0, \leq 0}$, thus we see
that all the lattice points on the path lie in $\overline{L}_{K_0, \leq 0}$, showing that the entire path lies in the connected component of 0.

We continue in this fashion, consecutively building paths to elements $D^4+D^1$, $D^4+D^2$, $D^4+D^2+D^1$, $D^4+D^3$, $D^4+D^3+D^1$ etc, until we reach all elements 
in $\mathcal{S}$. Here is the
recursively constructed full path showing that $D^6$ is in the connected component of $0$:
\[ 
\underbrace{\underbrace{
\underbrace{\underbrace{\underbrace{E^1}_{D^1} + E^2}_{D^2} + \underbrace{E^1}_{D^1} + E^3}_{D^3}
 + \underbrace{\underbrace{E^1}_{D^1} + E^2}_{D^2} + E^4}_{D^4} + E^5}_{D^5} + 
 \underbrace{
\underbrace{\underbrace{\underbrace{E^1}_{D^1} + E^2}_{D^2} + \underbrace{E^1}_{D^1} + E^3}_{D^3}
 + \underbrace{\underbrace{E^1}_{D^1} + E^2}_{D^2} + E^4}_{D^4} + \]
 \[
 + \underbrace{\underbrace{\underbrace{E^1}_{D^1} + E^2}_{D^2} + \underbrace{E^1}_{D^1} + E^3}_{D^3} + E^6 = D^6.
\]

We now return to the proof of  Lemma~\ref{lem:component} and proceed with the formal recursion.                                     
   To every lattice point $F \in \mathcal{S}$ we can assign an $R$-tuple of integers  \[ t(F) := (s_R, \dots, s_2, s_1), \] where $s_r$ is the number of summands 
in $F$ that are elements of depth $r$.  Namely, if  
$F = \sum_{a,i} \varepsilon^a_i D^a_i$, $\varepsilon^a_i \in \{0,1\}, D^a_i \in \mathcal{D}$, we set $s_r := \sum_{i = 1}^{k(r)} \varepsilon^r_i$, $1\leq r\leq R$.

We define a partial ordering on $\mathcal{S}$ as follows. 
For $F_1, F_2 \in \mathcal{S}$ we say that $F_1 \prec F_2$ if $t(F_1) < t(F_2)$ with respect to the lexicographical ordering, 
looking for the first differing number from the left (the largest index $r$ where there is no equality). In other words, $F_1 \prec F_2$ if $F_1$ has fewer elements 
of depth $r$ than $F_2$, and $F_1$ and $F_2$ have the same number of elements of depth $r+1, r+2, \dots, R$.   

Take any element $F \in \mathcal{S}$.  Let $u$ be the smallest upper index among the summands of $F$, and write
$F$ in the form $F = \overline{F} + D^u_i$,
where all summands in  $\overline{F}$ have upper index at least $u$  ($\overline{F}$ might be an empty sum). 
Looking at Formula~\eqref{eq:prox}, we get $D^u_i = \overline{D} + E^u_i$, where all summands $\overline{D} \in \mathcal{F}_{u-1}$  have 
 upper index less than $u$. Again, $\overline{D}$ might be an empty sum.

It follows that $\overline{F}$ and $\overline{D}$ cannot share any summands, so the sum $\widetilde{F} = \overline{F} + \overline{D}$ lies in $\mathcal{S}$.
Moreover, due to our choice of the index $u$, we have $\widetilde{F} \prec F$. Because $F = \widetilde{F} + E^u_i$, this shows that 
any nontrivial element of $\mathcal{S}$ can be written as a sum of a lexicographically 
lower element and a basis element. To illustrate,  we would decompose the element $F=D^4+D^3$  in the above example as $F = \widetilde{F} +E^3$, where  
$\widetilde{F}=D^4+D^2+D^1$ is lexicographically lower than $F$; similarly, $D^4+D^2+D^1= D^4+D^2+E^1$, with  $D^4+D^2 \prec D^4+D^2+D^1$. 

The relation $F = \widetilde{F} + E^u_i$ implies that $F$ is connected to $\widetilde{F}$ by the edge $E^u_i$. Therefore,
if $\widetilde{F}$ is in the connected component $C_0$, then so is $F$. Since the empty sum equals 0 and obviously lies in $C_0$, the above recursion 
shows that  every element of $\mathcal{S}$ is in $C_0$.

It is worth noting that the above decomposition $F = \widetilde{F} + E^u_i$ provides a recursive
way to write any sum from $\mathcal{S}$ as a series of basis elements,
indicating the path starting at zero and ending at the given lattice point, as shown in the  example above.

We have shown that $\mathcal{S} \subset C_0$. Corollary~\ref{cor:phipsi}   identifies $\mathcal{S}$ with the zero set of $\varphi_0$. 
The proof of Lemma~\ref{lem:latticevsgradedroot} implies that 
that the zero set of the function $\varphi_0$ is the union of several connected components of  $\overline{L}_{K_0, \leq 0}$ (in particular, 
$\varphi_0$ must vanish on {\em entire} connected components).       
It follows that $\mathcal{S} = C_0$. 

(2) The second statement of the lemma follows from the first statement, Corollary~\ref{cor:phipsi}, and Lemma~\ref{lem:latticevsgradedroot}. (Compare with Part (3) of 
Lemma~\ref{lem:canhom-min}.)
\end{proof}

\begin{remark}\label{Zcoeffs} In this paper we worked with coefficients in
$\FF=\ZZ/2$ for simplicity, however our results hold for integer coefficients as
well. When working with $\ZZ$ coefficients, the contact invariant $c^+(\xi)$ is
only defined up to sign. The results of \cite{Pla, Ghi} then assert that 
$F^+_{W(\Gamma), \s_{can}}(c^+(\xi))$ is a generator
 $\pm 1 \in \T_0^+$, where  $\T_0^+$ now stands for  $\ZZ[U, U^{-1}]/U \cdot
\ZZ[U]$. A further issue is that cobordism maps are only defined up to sign in
\cite{OSfour}, although \cite[\textsection 2.1]{OSplumb} explains how to define
the map $T^+$ up to one overall sign (which can also be fixed). The isomorphisms
between various constructions of lattice cohomology work with $\ZZ$
coefficients, and the distinguished elements $\phi_0$, $\varphi_0$, and $\psi_0$
of Lemma~\ref{lem:canhom-min} and its analogs in Section~\ref{s:blowups} correspond to one another. Thus, we see that 
the element $c^+(\xi_0) \in HF^+(-Y(\Gamma), \ts)/\pm 1$ is mapped to $(\pm
\psi_0)$ under the map 
$ HF^+(-Y(\Gamma), \ts)/\pm 1 \to  \HH (R_{K_0}, \chi_{K_0})/\pm 1$, and our
proof goes through as before. 
\end{remark}



 

\end{document}